\documentclass[11pt]{amsart}

\usepackage{amsmath, amssymb, amsthm}
\usepackage{mathtools}
\usepackage{hyperref}
\usepackage{todonotes}

\newtheorem{thm}{Theorem}[section]
\newtheorem{Def}[subsection]{Definition}
\newtheorem{ex}[subsection]{Example}
\newtheorem{Con}[subsection]{Conjecture}
\newtheorem{rem}[subsection]{Remark}
\newtheorem{prop}[subsection]{Proposition}

\newtheorem{lem}[subsection]{Lemma}

\newcommand{\cl}{\operatorname{cl}}
\renewcommand{\int}{\operatorname{int}}

\title[Cubical homology theories for hypergraphs]
{A cubical approach to homology theories for hypergraphs}

\author{Syed Hadi Ali Zaidi}
\address{Department of Mathematical Sciences, Institute of Business Administration (IBA), Karachi, Pakistan}
\email{shzaidi@iba.edu.pk}
\author{Samira Sahar Jamil}
\address{Department of Mathematics, University of Notre Dame, Notre Dame, Indiana, USA}
\email{sjamil@nd.edu}

\begin{document}

\begin{abstract}
We introduce three homology theories for hypergraphs, \linebreak
namely $\Gamma$-homology, $\Box$-homology, and $\times$-homology, and show that they are pairwise non-isomorphic and distinct from the embedded homology of hypergraphs. We further introduce a notion of homotopy for hypergraphs that extends the discrete homotopy theory of graphs. Among the homology theories considered, we prove that $\Box$-homology is invariant under this homotopy, whereas $\Gamma$-homology and $\times$-homology fail to satisfy homotopy invariance. Based on excision, we also identify a distinctive structural behavior exhibited by $\Gamma$-homology that further differentiates it from $\Box$-homology.
\end{abstract}

\maketitle


\section{Introduction}
Homology theories for discrete structures have attracted significant attention in recent years, motivated both by their connections with classical algebraic topology and by applications in combinatorics, category theory, and topological data analysis. Notable examples include discrete cubical homology for graphs \cite{HG_Barcelo19}, path homology for directed graphs \cite{HG_Grig18}, and homology theories for closure spaces \cite{HG_bubenik21, HG_bubenik22}.

Hypergraphs provide a natural generalisation of graphs and a flexible framework for modelling higher-order interactions; see \cite{HG_Emilie,HG_Bretto}. From an applied perspective, hypergraph homology has emerged as a tool for extracting topological features from complex datasets, where hyperedges encode multi-way interactions beyond pairwise relationships. Such methods have found applications in network science and data analysis \cite{HG_Emilie,HG_Bressan}, where homological invariants can capture higher-dimensional \linebreak structure not visible to classical graph-theoretic approaches; see also \cite{HG_bretto13,HG_Gaspar26,HG_Liu24} for further applications.

The embedded homology theory of hypergraphs was introduced in \linebreak \cite{HG_Bressan} and is defined from a chain complex naturally associated to a hypergraph $H$. Let $(C_\bullet(H),\partial_\bullet)$ denote the simplicial chain complex of the smallest simplicial complex containing $H$, and for each $n\geq 0$, let $D_n\subseteq C_n(H)$ be the subgroup generated by the $n$-hyperedges of $H$. Since $(D_n,\partial_n)$ does not, in general, form a chain complex, one defines the infimum chain complex $(\mathrm{Inf}_\bullet(H),\partial_\bullet)$ by
\[
\mathrm{Inf}_n(H)=D_n\cap \partial_n^{-1}(D_{n-1}),
\]
for each $n\geq 1$, and the resulting homology groups are called the embedded homology groups of $H$; see \cite{HG_Bressan}.

Since its introduction, embedded homology has been studied from several perspectives and has developed into an active area of research, both for its intrinsic homological properties and for its applications \cite{HG_Gaspar26,HG_Liu24,HG_Ren26,HG_Valentiner26}. Beyond embedded homology, a variety of alternative homological constructions for hypergraphs have also been proposed; see the survey \cite{HG_Gaspar26}.

In this paper, we introduce three homology theories for hypergraphs, namely $\Gamma$-homology, $\Box$-homology, and $\times$-homology, motivated by the cubical singular homology in algebraic topology. These theories are designed to detect classes of cycles that are different from those detected by embedded homology. In particular, $\Gamma$-homology detects both uniform and non-uniform cycles; see Example \ref{general 3-cycle}. The theories $\Box$-homology and $\times$-homology extend discrete cubical homology and $\times$-homology for graphs, respectively, whereas $\Gamma$-homology exhibits substantially different behaviour.

We show that these homology theories are, in general, pairwise non-isomorphic and capture distinct structural features of hypergraphs; see Examples \ref{eg gamma 3-cycle}-\ref{eg box and x 1-hom}. In particular, while the first embedded homology group coincides with $\Box$-homology for uniform hypergraphs, the two theories diverge in graphs containing non-uniform cycles. For instance, the first embedded homology group of the hypergraph in Example \ref{general 3-cycle} is trivial, whereas $H_1^\Gamma$ is nontrivial.

 Homotopy theory for hypergraphs has previously been studied in \cite{HG_Chen01}, where the notion of Graham homotopy was introduced via Graham reductions. We define a notion of homotopy for hypergraphs that extends the discrete homotopy theory of graphs. We prove that $\Box$-homology is invariant under this homotopy, whereas $\Gamma$-homology and $\times$-homology fail to satisfy homotopy invariance; see Example \ref{eg not homotopy invariant}. Determining a homotopy theory that is naturally associated with $\Gamma$-homology remains an interesting open problem.

Excision and the Mayer-Vietoris sequence are among the fundamental structural properties of classical homology theories. In this paper, we establish analogues of excision for each of the three homology theories introduced above. In particular, $\times$-homology satisfies a hypergraph analogue of the classical excision property, whereas $\Gamma$-homology and $\Box$-homology exhibit more subtle behaviour. For the latter two theories, we obtain only partial Mayer--Vietoris type sequences, highlighting their different behavior than embedded homology, which has a complete Mayer--Vietoris sequence as established in \cite{HG_Bressan}. The absence of complete Mayer--Vietoris sequences makes $\Gamma$-homology and $\Box$-homology significantly more difficult to compute, but also suggests richer categorical and homotopical structures underlying these theories. Similar phenomena have already appeared in the study of discrete cubical homology and graph homotopy theories; see \cite{HG_Kapulkin25}.

This paper is organised as follows. In Section \ref{Sec_prelim}, we recall the preliminary notions and constructions used throughout the paper. Section \ref{ch.hom} introduces the three homology theories and studies their basic properties, including analogues of the dimension, connectivity, and additivity axioms. In Section \ref{hmtpy}, we develop a homotopy theory for hypergraphs and prove the homotopy invariance of $\Box$-homology. Finally, Section \ref{sec:excision} establishes excision-like results for the homology theories under consideration, showing in particular that $\times$-homology satisfies a hypergraph analogue of excision, while $\Gamma$-homology and $\Box$-homology satisfy weaker excision-like properties.

\section{Preliminaries}\label{Sec_prelim}
We begin by recalling basic notions on hypergraphs. A hypergraph $G$ consists of a finite vertex set $V$ together with a collection $E \subseteq \mathcal{P}(V)$ of subsets of $V$, called hyperedges. The degree of a vertex is the number of hyperedges that contain it, and the size of a hyperedge $e$ is its cardinality. A hypergraph is called $k$-uniform if every hyperedge has size $k$; in particular, a $2$-uniform hypergraph is a graph.

Following \cite{HG_Emilie}, we recall the notion of $s$-paths. Let $s$ be a positive integer. An $s$-path of length $k$ between hyperedges $f$ and $g$ is a sequence of hyperedges
\[
f = e_{i_1}, e_{i_2}, \ldots, e_{i_k} = g
\]
such that for each $j = 2, \ldots, k$, one has $|e_{i_{j-1}} \cap e_{i_j}| \geq s$, and $e_{i_{j-1}} \not\subseteq e_{i_j}$. Furthermore, an $s$-path is said to be $k$-cycle if, in the above sequence $e_{i_1}=e_{i_{k+1}}$. A hypergraph is called $s$-connected if any two hyperedges can be joined by an $s$-path. 

A hypergraph morphism $f \colon G=(V,E) \to G'=(V',E')$ is a map $f \colon V \to V'$ such that for every hyperedge $e \in E$, there exists a hyperedge $e' \in E'$ with $f(e) \subseteq e'$. The image of $f$ is the hypergraph whose vertex set is $f(V)$ and whose hyperedges are the subsets $f(e)$ for $e \in E$.

Given a hypergraph $G=(V,E)$ with $V=\{v_1,\dots,v_n\}$ and $E=\{e_1,\dots,\\ e_m\}$, its dual hypergraph $G^*$ is defined by taking $E$ as the vertex set, and defining hyperedges $v_j^* = \{ e_i \mid v_j \in e_i \}$ for $j=1,\dots,n$. Thus, the roles of vertices and hyperedges are interchanged. We will later consider the relationship between a hypergraph and its dual from the perspective of homotopy theory.

Let $G=(V,E)$ be a hypergraph. An induced subhypergraph on a subset $V' \subseteq V$ is the hypergraph $G'=(V', E')$ with $E'=\{ e \cap V' \mid e \in E \}$ as the hyperedge set. More generally, a hypergraph $G'=(V',E')$ is called a subhypergraph of $G$ if $V' \subseteq V$ and each $e' \in E'$ is contained in some $e \in E$. Every induced subhypergraph is a subhypergraph, but the converse need not hold.

Given hypergraphs $G_1=(V_1,E_1)$ and $G_2=(V_2,E_2)$, their Cartesian product is the hypergraph $G_1 \times G_2$ with vertex set $V_1 \times V_2$ and hyperedges
\[
E_1 \times E_2 = \{ \{x\} \times e : x \in V_1,\, e \in E_2 \} \cup \{ e \times \{y\} : e \in E_1,\, y \in V_2 \}.
\]

Finally, two hypergraphs $G=(V,E)$ and $G'=(V',E')$ are said to be isomorphic if 
there exist bijection $f \colon V \to V'$  such that for all $e \in E$, $f(e)=e'$  and $f|_e$ is a bijection.

\section{Homology Theories for Hypergraphs}\label{ch.hom}

In this section we construct three homology theories for hypergraphs, namely $\Gamma$-homology, $\Box$-homology, and $\times$-homology. Since a $2$-uniform hypergraph is a graph, these constructions extend existing homology theories for graphs. In particular, we exhibit examples showing that $\Gamma$-homology is not isomorphic to discrete cubical homology.

\subsection{$\Gamma$-homology and cubical singular homology}
For each $n \geq 1$, let $V_n = \{0,1\}^n$. We define three hypergraph structures on $V_n$.

The $\Gamma$-hypercube has vertex set $V_n$ and hyperedges
\[
e_a = \{ v \in V_n \mid |v-a| \leq 1 \}, \quad a \in V_n,
\]
where $|v-a| = \sum_{i=1}^n |v_i - a_i|$.

The $\times$-hypercube is the complete graph on the vertex set $V_n$.

The $\Box$-hypercube is defined inductively as the $n$-fold Cartesian product of the $1$-dimensional hypergraph with vertex set $\{0,1\}$ and a single hyperedge $\{0,1\}$.

Let $J_n^*=(V_n^*,E_n^*)$ denote any of these hypercubes, where $* \in \{\Gamma, \Box, \times\}$.
A singular $(n,*)$-hypercube in a hypergraph $G$ is a hypergraph morphism
\[
\sigma \colon J_n^* \to G.
\]
For each $n \geq 0$, let $Q_n^*(G)$ be the free abelian group generated by all singular $(n,*)$-hypercubes.

A singular $(n,*)$-hypercube $\sigma$ is called degenerate if it is independent of one coordinate. In-particular, $\sigma$ is degenerate if for some $i,~\sigma(v_1,v_2,\ldots,v_{i-1},\\ 0,v_{i+1},\ldots,v_n)=\sigma(v_1,v_2,\ldots,v_{i-1},1,v_{i+1},\ldots,v_n)$. Let $D_n^*(G)$ be the subgroup generated by degenerate cubes, and define
\[
C_n^*(G) := Q_n^*(G)/D_n^*(G).
\]

For $\sigma \in Q_n^*(G)$, define face maps $A_i\sigma, B_i\sigma \in Q_{n-1}^*(G)$ by
\[
A_i\sigma(v_1,\dots,v_{n-1}) = \sigma(v_1,\dots,v_{i-1},0,v_i,\dots,v_{n-1}),
\]
\[
B_i\sigma(v_1,\dots,v_{n-1}) = \sigma(v_1,\dots,v_{i-1},1,v_i,\dots,v_{n-1}),
\]
for $1 \leq i \leq n$.

The boundary operator $\partial_n \colon Q_n^*(G) \to Q_{n-1}^*(G)$ is defined by
\[
\partial_n(\sigma) = \sum_{i=1}^n (-1)^i \big(A_i\sigma - B_i\sigma\big).
\]
This descends to a well-defined map $\partial_n \colon C_n^*(G) \to C_{n-1}^*(G)$, and one checks that $\partial_{n-1}\partial_n = 0$. Thus $(C_\bullet^*(G), \partial_\bullet)$ is a chain complex.

We define
\[
Z_n^*(G) := \ker \partial_n, \qquad B_n^*(G) := \operatorname{im} \partial_{n+1},
\]
and the homology groups
\[
H_n^*(G) := Z_n^*(G)/B_n^*(G).
\]

We now proceed to some examples and propositions which helps us in understanding the defined homologies.

\begin{ex}\label{eg single point hom} Consider an empty hypergraph $G$. Then 
$
Q_n^*(G)=D_n^*(G)=C_n^*(G)=\{0\} \quad \text{for all } n \geq 0.
$
If $G$ consists of a single vertex with a hyperedge containing it, then there is exactly one nondegenerate $(0,*)$-hypercube. Thus,
$
C_0^*(G)=Q_0^*(G)\cong \mathbb{Z},
$
while for all $n>0$, every $(n,*)$-hypercube is degenerate, so $C_n^*(G)=\{0\}$. Hence,
\[
H_n^*(G)=
\begin{cases}
\mathbb{Z} & n=0,\\
\{0\} & n \geq 1.
\end{cases}
\]
The same conclusion holds if $G$ has $n$ vertices with a single hyperedge containing all of them.
\\
Also, if $G$ consists of a single vertex with no hyperedge, then 
\[
C_n^*(G)=\{0\} \quad \text{for all } n \geq 0,
\]
and hence 
\[
H_n^*(G)=\{0\} \quad \text{for all } n \geq 0.
\]
\end{ex}

\begin{prop}\label{prop 1-connected}
	If a hypergraph $G$ is $1$-connected, then $H_0^*(G)\cong \mathbb{Z}$.
\end{prop}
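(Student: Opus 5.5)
We identify the low-degree pieces of the chain complex and show that $C_0^*(G)/B_0^*(G)$ is generated by the class of a single vertex, which is not torsion. Here $J_0^*$ is the one-point hypergraph $\{0,1\}^0$, and we use the convention implicit in Example~\ref{eg single point hom}: a $(0,*)$-hypercube is a vertex $v$ of $G$ lying in some hyperedge. Since $\partial_0=0$, we have $H_0^*(G)=C_0^*(G)/\operatorname{im}\partial_1$.

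First we determine the $(1,*)$-hypercubes. For every $*\in\{\Gamma,\Box,\times\}$, the hypergraph $J_1^*$ has vertex set $\{0,1\}$, and $\{0,1\}$ is contained in a hyperedge. For $\Gamma$ this is $e_0=\{0,1\}$; for $\Box$ and $\times$ it is the edge $\{0,1\}$. A map $\sigma\colon\{0,1\}\to V$ is therefore a morphism exactly when $\{\sigma(0),\sigma(1)\}$ lies in some hyperedge of $G$. Its boundary is $\partial_1\sigma=-(A_1\sigma-B_1\sigma)=\sigma(1)-\sigma(0)$, where we identify $0$-cubes with vertices. So $\operatorname{im}\partial_1$ is generated by the differences $w-v$ over all pairs $v,w$ lying in a common hyperedge.

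Next we use $1$-connectivity. Take any two covered vertices $v\in f$ and $w\in g$, where $f,g$ are hyperedges. Choose a $1$-path $f=e_{i_1},\dots,e_{i_k}=g$, and pick $u_j\in e_{i_{j-1}}\cap e_{i_j}$, which is nonempty. Consecutive vertices in the sequence $v,u_2,\dots,u_k,w$ share a hyperedge: $v,u_2\in e_{i_1}$; $u_j,u_{j+1}\in e_{i_j}$; and $u_k,w\in g$. Telescoping the differences gives $w-v\in\operatorname{im}\partial_1$. The inclusion condition $e_{i_{j-1}}\not\subseteq e_{i_j}$ in the definition of an $s$-path plays no role here. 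Hence all vertex classes coincide in $H_0^*(G)$, and $H_0^*(G)$ is cyclic, generated by $[v_0]$ for any fixed covered vertex $v_0$. If $G$ has no hyperedges, the statement is vacuous or degenerate, so we assume $E\neq\emptyset$.

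Finally, we rule out torsion with the augmentation $\varepsilon\colon C_0^*(G)\to\mathbb Z$, $\varepsilon(v)=1$. There are no degenerate $0$-cubes, so $C_0^*=Q_0^*$ and $\varepsilon$ is well defined and surjective. We also have $\varepsilon\partial_1\sigma=1-1=0$, including for degenerate $\sigma$, where the boundary is $0$ anyway. Therefore $\varepsilon$ induces a surjection $H_0^*(G)\to\mathbb Z$. A cyclic group that surjects onto $\mathbb Z$ is isomorphic to $\mathbb Z$, which proves the claim.

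The only step needing care is the identification of the $1$-cubes for each of the three cube types, namely checking that adjacency in every $J_1^*$ amounts to lying in a common hyperedge of $G$; the rest is a routine telescoping argument.
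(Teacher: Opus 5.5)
Your argument is correct and is essentially the paper's: the augmentation $\varepsilon$ vanishes on $\operatorname{im}\partial_1$, and telescoping along a $1$-path shows every vertex class equals a fixed one. The paper phrases the conclusion as $\ker\varepsilon=B_0^*(G)$, while you phrase it as a cyclic group surjecting onto $\mathbb{Z}$. Your explicit identification of the $(1,*)$-cubes for each $*$, and your restriction to vertices lying in some hyperedge, make precise what the paper leaves implicit; one small correction is that when $G$ has no nonempty hyperedge the statement genuinely fails, since then $H_0^*(G)=0$, rather than being vacuous.
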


\begin{proof}
Define $\epsilon:C_0^*(G)\to \mathbb{Z}$ by
\[
\epsilon\Big(\sum_i n_i \sigma_i\Big)=\sum_i n_i.
\]
Since $C_0^*(G)=Z_0^*(G)$, it suffices to show $\ker(\epsilon)=B_0^*(G)$.

For $\sum_i n_i \sigma_i \in C_1^*(G)$,
\[
\partial_1\Big(\sum_i n_i \sigma_i\Big)
= \sum_i n_i (B_1\sigma_i - A_1\sigma_i),
\]
so $\epsilon\circ \partial_1=0$, and hence $B_0^*(G)\subseteq \ker(\epsilon)$.

Conversely, let $\sum_i n_i v_i \in \ker(\epsilon)$, so $\sum_i n_i=0$. Fix $v\in V$. For each $v_i$, $1$-connectedness gives a sequence of hyperedges joining $v$ to $v_i$ through a sequence $v=v_{i_0},v_{i_1},\ldots,v_{i_k}=v_i$ such that for $j=1,\ldots,k-1$, each $v_{i_j}$ contained in the intersection of the hyperedges, yielding $(1,*)$-hypercubes of the form \[\sigma_{i_j}:J_1^*\to G\] with $\sigma_{i_j}(0)=v_{i_{j-1}}$ and $\sigma_{i_j}(1)=v_{i_j}$ such that 
\[
\partial_1\Big(\sum_j^k\sigma_{i_j}\Big)=v_{i_k} - v_{i_0}=v_i-v.
\]
Thus,
\[
\partial_1\Big(\sum_{i,j} n_i \sigma_{i_j}\Big)
= \sum_i n_i v_i - \Big(\sum_i n_i\Big)v
= \sum_i n_i v_i,
\]
so $\ker(\epsilon)\subseteq B_0^*(G)$.

Therefore, $\ker(\epsilon)=B_0^*(G)$ and
\[
H_0^*(G)=Z_0^*(G)/B_0^*(G)\cong \mathbb{Z}.
\]
\end{proof}

Note that, if hypergraph $G$ is $s$-connected, $H_0^*(G)=\mathbb{Z}$. The proof follows from $\ref{prop 1-connected}$.\\

\begin{prop}
	Let hypergragh $G=(V,E)$ and $G_1$ and $G_2$ be the non empty subhypergaphs such that $V=V_1 \bigsqcup V_2$ and $E= E(V_1) \bigsqcup E(V_2)$, where $V_1$ and $V_2$ are vertex sets of $G_1$ and $G_2$, respectively. Then $H_n^*(G)=H_n^*(G_1)\oplus H_n^*(G_2)$. 
\end{prop}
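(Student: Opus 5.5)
The plan is to show that the chain complex of $G$ splits as a direct sum of chain complexes,
\[
C_\bullet^*(G)\cong C_\bullet^*(G_1)\oplus C_\bullet^*(G_2),
\]
and then use that homology commutes with direct sums. Throughout, $E(V_i)$ denotes the hyperedges contained in $V_i$, so every hyperedge of $G$ lies entirely in $V_1$ or entirely in $V_2$.

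The key step is to show that every singular $(n,*)$-hypercube $\sigma\colon J_n^*\to G$ has image entirely in $V_1$ or entirely in $V_2$. For $n=0$ this is immediate. For $n\geq 1$, observe that in each of the three hypercubes $J_n^\Gamma$, $J_n^\Box$, $J_n^\times$, any two vertices at Hamming distance one lie in a common hyperedge:
\begin{itemize}
\item in the $\Gamma$-hypercube, $v$ and $w$ both lie in $e_v$;
\item in the $\Box$-hypercube, they lie in the edge $\{x\}\times\{0,1\}\times\{y\}$ obtained by varying the coordinate in which they differ;
\item in the $\times$-hypercube, any two vertices form an edge.
\end{itemize}
Since $\sigma$ is a morphism, the images $\sigma(v)$ and $\sigma(w)$ then lie in a single hyperedge of $G$, hence in the same $V_i$. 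The graph on $\{0,1\}^n$ given by Hamming distance one is connected, so all of $\sigma(V_n)$ lies in one $V_i$.

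Moreover, $\sigma$ is then a morphism $J_n^*\to G_i$: each image $\sigma(e)$ of a hyperedge is contained in some hyperedge of $G$, and that hyperedge lies in $E(V_i)$. Conversely, every cube in $G_i$ is a cube in $G$. This gives a bijection between the generators of $Q_n^*(G)$ and the disjoint union of the generators of $Q_n^*(G_1)$ and $Q_n^*(G_2)$, so
\[
Q_n^*(G)=Q_n^*(G_1)\oplus Q_n^*(G_2).
\]

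Next we check compatibility with the rest of the structure:
\begin{itemize}
\item Degeneracy is intrinsic to $\sigma$, so $D_n^*(G)=D_n^*(G_1)\oplus D_n^*(G_2)$, and the splitting passes to the quotient $C_n^*$.
\item The face maps $A_i\sigma$ and $B_i\sigma$ have image inside the image of $\sigma$, so $\partial_n$ preserves each summand.
\end{itemize}
Thus $C_\bullet^*(G)$ is the direct sum of the chain complexes $C_\bullet^*(G_1)$ and $C_\bullet^*(G_2)$. Kernels and images split accordingly, so $Z_n^*$ and $B_n^*$ split, and hence $H_n^*(G)\cong H_n^*(G_1)\oplus H_n^*(G_2)$.

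The main obstacle is the connectivity step, specifically checking it uniformly for the three cube structures, and in particular the $\Box$ case with its inductive Cartesian-product definition. There one must confirm that adjacent vertices really do share a product hyperedge; the rest is formal.
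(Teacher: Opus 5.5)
Your proposal is correct and takes the same route as the paper, which simply asserts that $C_n^*(G)=C_n^*(G_1)\oplus C_n^*(G_2)$ ``by definition of hypergraph morphism'' and concludes. You supply the details the paper omits: adjacent vertices of each $J_n^*$ share a hyperedge, so by connectivity every singular cube lands in a single $G_i$, and this splitting is compatible with degeneracies and face maps.
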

\begin{proof}
	We will just sketch the proof. For the case of $H_n^*(G)$ note that by definition of hypergraph morphism, $C_n^*(G)=C_n^*(G_1)\oplus C_n^*(G_2)$ and the result follows.
\end{proof}

%

\subsection{Homology and Cycles}

In the examples below, we use the following notation for $(n,*)$-hypercubes.
A $(1,*)$-hypercube $\sigma \colon J_1^* \to G$ will be written as
$\sigma_{ab}$, where $a,b\in V$ are the images of $0$ and $1$,
respectively.
Similarly, a singular $(2,*)$-hypercube $\sigma \colon J_2^* \to G$ will be written
as $\sigma_{abcd}$, where $a,b,c,d$ denote the images of
$(0,0),(1,0),(0,1),(1,1)$, respectively. The examples \ref{eg gamma 3-cycle},\ref{eg x and box hom},\ref{eg box and x 1-hom} show that all three of the defined homologies are distinct from each other. In particular, examples \ref{eg gamma 3-cycle} and \ref{eg x and box hom} show that the $\Gamma$-homology is distinct from the $\Box$-homology and the $\times$-homology. Whereas, examples \ref{eg x and box hom} and \ref{eg box and x 1-hom} show that the $\Box$-homology and $\times$-homology are distinct from each other.

\begin{ex}\label{eg gamma 3-cycle}
Let $G=(V,E)$ be the hypergraph with vertex set $V=\{a,b,c\}$ and hyperedge set $E=\{\{a,b\},\{a,c\},\{b,c\}\}$. Then
\[
C_0^\Gamma(G)=\langle a,b,c \rangle \cong \mathbb{Z}^3,
\]
and
\[
C_1^\Gamma(G)=\langle \sigma_{ab}, \sigma_{ba}, \sigma_{ac}, \sigma_{ca}, \sigma_{bc}, \sigma_{cb} \rangle,
\]
where $\sigma_{ab}(0)=a$ and $\sigma_{ab}(1)=b$.
\\
A direct computation (via the Smith normal form of $\partial_1$) shows that
\[
Z_1^\Gamma(G)\cong \mathbb{Z}^4, \quad B_0^\Gamma(G)\cong \mathbb{Z}^2,
\]
and hence $H_0^\Gamma(G)\cong \mathbb{Z}$.
\\
Similarly, boundaries of the $(2,\Gamma)$-cubes $\sigma_{abaa}, \sigma_{acaa}, \sigma_{bcbb}$ generate $\operatorname{Im}(\partial_2)\cong \mathbb{Z}^3$, yielding
\[
H_1^\Gamma(G)\cong \mathbb{Z}.
\]
\end{ex}

\begin{ex}\label{eg x and box hom} 
	Let $G=(V,E)$ be the hypergraph with vertex set $V=\{a,b,c\}$ and set of hyperedges $E=\{\{a,b\},\{b,c\},\{a,c\}\}$. We have \[{C}_0^\times(G)=\langle \sigma_a,\sigma_b,\sigma_c\rangle \cong \mathbb{Z}^3\] where $\sigma_a,\sigma_b,$ and $\sigma_c$ are  $(0,\times)$-hypercubes. 	
    \[{C}_1^\times(G)=\langle \sigma_{ab},\sigma_{ba},\sigma_{bc},\sigma_{cb},\sigma_{ca},\sigma_{ac}\rangle \cong \mathbb{Z}^6.\] Then after computing boundary matrix of $\partial_1$ we have  \[{Z}_1^\times(G)\cong\mathbb{Z}^4 ~\text{and} ~ {B}_0^\times(G)=\mathbb{Z}^2.\] Hence, \[H_0^\times(G)=\mathbb{Z}.\] There are 48 non-degenerate $(2,\times)$-hypercubes, and after computation we have, \[{B}_1(J_1^\times,G)\cong \mathbb{Z}^4, \] and thus, \[H_1^\times(G)\cong \{0\}.\] Similar computation with same hypergraph $G$ gives \[H_1^\Box(G)\cong \{0\}.\] 
\end{ex}

\begin{ex}\label{eg box and x 1-hom}
Let $G=(V,E)$ with $V=\{a,b,c,d\}$ and $E=\{\{a,b\}, \{a,d\},\\ \{b,c\}, \{c,d\}\}$ then \[{C}_0^\Box(G)={Z}_0^\Box(G)=\mathbb{Z}^4\] and \[{C}_1^\Box(G)=\langle \sigma_{ab}, \sigma_{ba},\sigma_{ad},\sigma_{da},\sigma_{bc},\sigma_{cb}, \sigma_{cd},\sigma_{dc}\rangle.\] After the computation of the boundary matrix $\partial_{1}({C}_1^\Box(G))$, we have \[{Z}_1^\Box(G)=\mathbb{Z}^5 \text{ and } {B}_0^\Box(G)=\mathbb{Z}^3.\] Thus, \[H_0^\Box(G)= \mathbb{Z}.\] The Rank of the boundary matrix $\partial_{2}({C}_2^\Box(G))$ is $5$. Hence, \[H_1^\Box(G)=\{0\}.\] Note that, \[{C}_0^\times(G)={Z}_0^\times(G)=\mathbb{Z}^4.\] Now, the computation of the boundary matrix of $\partial_1({C}_1^\times(G))$ gives, \[{Z}_1^\times(G)=\mathbb{Z}^5 \text{ and } {B}_0^\times(G)=\mathbb{Z}^3.\] Thus, \[H_0^\times(G)=\mathbb{Z}.\] Rank of the boundary matrix of $\partial_{2}({C}_2^\times(G))$ is 4. Hence \[H_1^\times(G)=\mathbb{Z}.\]
\end{ex}

If there exists hypergraph morphism $f:G\rightarrow H$ and  $(n,*)$-hypercube $\sigma$ then one can define homomorphism $f_{\#}: Q_n^*(G)\rightarrow  Q_n^*(H)$ such that $f_{\#}(T)=f\circ \sigma$. Since $f$ and $\sigma$ are hypergraph morphism, then $f_{\#}(\sigma)$ is also a $(n,*)$-hypercube and $f_{\#}$ maps $ D_n^*(G)$ to $ D_n^*(H)$. Therefore, $f_{\#}: C_n^*(G)\rightarrow  C_n^*(H)$.
Note that $f_{\#} \partial_n=\partial_{n}f_{\#}$. Hence we have $f_{*}: H_m^*(G) \rightarrow  H_m^*(H)$. Therefore, $H_n^*$ is a functor.

We next exhibit a family of hypergraphs for which
$H_1^\Gamma$ can have arbitrarily large rank. In particular, for
$n\geq2$, let $G_n^m$ be an $m$-uniform hypergraph consisting of $n$
$3$-cycles arranged consecutively, with each pair of consecutive
$3$-cycles sharing exactly one hyperedge (see Figure \ref{fig:Gnm}). Precisely, 

\begin{Def}\label{def_Gmn}
For integers $m\ge2$ and $n\ge1$, define the $m$-uniform hypergraph $G_n^m$ recursively as follows. Let $G_1^m$ consist of three hyperedges
\[
e_1,e_2,e_3
\]
such that
\[
|e_1\cap e_2|
=
|e_2\cap e_3|
=
|e_3\cap e_1|
=1,
\]
with the three pairwise intersections distinct. For $n\ge2$, obtain $G_n^m$ from $G_{n-1}^m$ by choosing an end hyperedge $e_{2n-1}$ and adjoining two new $m$-element hyperedges $e_{2n}$ and $e_{2n+1}$ such that
\[
|e_{2n-1}\cap e_{2n}|
=
|e_{2n}\cap e_{2n+1}|
=
|e_{2n+1}\cap e_{2n-1}|
=1,
\]
where these three intersection vertices are distinct, and such that
$e_{2n}$ and $e_{2n+1}$ have no other vertices in common with $G_{n-1}^m$.
\end{Def}
\begin{figure}[h]
    \centering
    \includegraphics[width=0.5\linewidth]{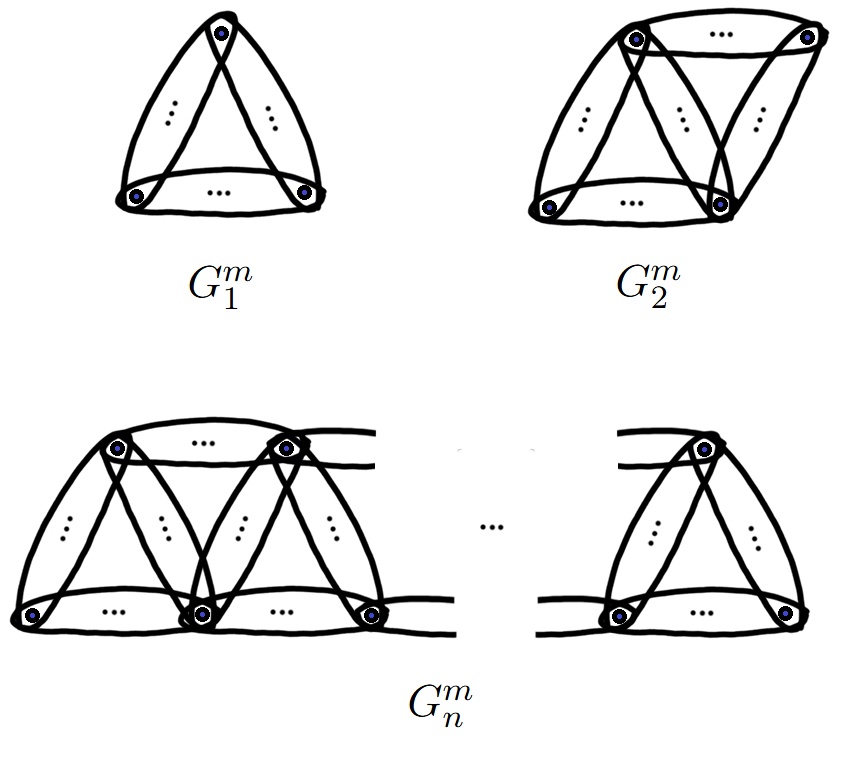}
    \caption{The hypergraph $G_n^m$.}
    \label{fig:Gnm}
\end{figure}

\begin{prop}\label{prop_h1_Gmn}
Let $m=2$ and $n\ge1$. Then
\[
H_1^\Gamma(G_n^2)\cong \mathbb{Z}^n.
\]
\end{prop}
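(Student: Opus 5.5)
We propose to argue by induction on $n$, taking Example~\ref{eg gamma 3-cycle} as the base case. For $m=2$, $G_1^2$ is exactly the triangle on three vertices with hyperedges the three edges, and there $H_1^\Gamma(G_1^2)\cong\mathbb{Z}$. Throughout we assume the natural reading of Definition~\ref{def_Gmn} for $m=2$. Each new triangle $e_{2n-1},e_{2n},e_{2n+1}$ shares the edge $e_{2n-1}=\{x,y\}$ with the previous one and adds a single new vertex $z_n$ with edges $\{x,z_n\},\{y,z_n\}$. So $G_n^2$ is a strip of $n$ triangles glued consecutively along edges, with $n+2$ vertices and $2n+1$ edges.

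The first step is to describe the $\Gamma$-chains explicitly. Since $e_a$ in $J_1^\Gamma$ is all of $\{0,1\}$, a $(1,\Gamma)$-cube is an ordered pair $(u,v)$ with $\{u,v\}$ an edge or $u=v$. Thus $C_1^\Gamma$ is free on the directed edges, of rank $2(2n+1)$. In $J_2^\Gamma$ each hyperedge $e_a$ is a ``corner'' consisting of $a$ and its two neighbours, and it must map into a $2$-element edge. So $\sigma_{abcd}$ is a $(2,\Gamma)$-cube exactly when each of the four corner triples spans at most an edge. Because $G_n^2$ contains no $4$-cycles, a short case check should show that every nondegenerate $2$-cube has a special form. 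Either it takes at most two values $\{u,v\}$ on an edge (the cubes $\sigma_{uvuu}$, $\sigma_{uvvv}$, $\sigma_{uuuv}$, \dots, as in Example~\ref{eg gamma 3-cycle}), or it falls into a small number of further patterns supported on a single edge.

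From this we will compute that $B_1^\Gamma$ is generated by the elements $\sigma_{uv}+\sigma_{vu}$ for each edge, modulo degenerate $1$-cubes. We expect, for instance, that $\partial\sigma_{uvuu}=\pm(\sigma_{uv}+\sigma_{vu})$ up to degenerates. The key claim, which is the main obstacle, is that \emph{no} $2$-cube has image meeting three distinct vertices. With that claim the triangles are never filled, and the image of $\partial_2$ is exactly the span $R$ of the $2n+1$ classes $\sigma_{uv}+\sigma_{vu}$. I would verify the claim directly: a map onto a triangle $\{x,y,z\}$ forces some corner to contain all three vertices, since the four vertices of the square with three distinct values give one vertex whose two neighbours carry the other two values.

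Next we compute ranks. $C_1^\Gamma/R$ is free of rank $2n+1$, and it identifies with the simplicial $1$-chains of the graph with a chosen orientation. The map $\partial_1$ factors through this quotient because $\partial(\sigma_{uv}+\sigma_{vu})=0$. Hence
\[
H_1^\Gamma(G_n^2)\cong \ker\bigl(\partial_1: C_1^{\mathrm{simp}}(G_n^2)\to C_0(G_n^2)\bigr),
\]
which is the simplicial $H_1$ of the $1$-skeleton. This is free of rank $E-V+1=(2n+1)-(n+2)+1=n$, using connectedness and Proposition~\ref{prop 1-connected}. As a consistency check, this recovers $\mathbb{Z}$ for $n=1$, matching Example~\ref{eg gamma 3-cycle}. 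Given this direct computation, the induction is only cosmetic.
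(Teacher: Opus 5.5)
Your proof is correct. It shares the paper's skeleton: both arguments hinge on $B_1^\Gamma(G_n^2)$ being spanned by the $2n+1$ classes $\sigma_{uv}+\sigma_{vu}$. The difference is that you justify what the paper only asserts. The paper counts ranks: it gets $\operatorname{rank} Z_1^\Gamma=(4n+2)-(n+1)=3n+1$, notes $\partial\sigma_{abaa}=\sigma_{ab}+\sigma_{ba}$, declares that these boundaries give all of $B_1^\Gamma$, and concludes $\mathbb{Z}^{3n+1}/\mathbb{Z}^{2n+1}\cong\mathbb{Z}^n$. It never excludes other $(2,\Gamma)$-cubes (for instance ones filling a triangle), and it says nothing about torsion in that quotient. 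Your lemma, that every $(2,\Gamma)$-cube in a $2$-uniform hypergraph is supported on a single edge, is exactly the missing step. Passing to $C_1^\Gamma/R\cong C_1^{\mathrm{simp}}$ then gives freeness automatically. More generally, it identifies $H_1^\Gamma$ of any graph with its simplicial $H_1$, free of rank $|E|-|V|+c$, where $c$ is the number of components.

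Two small repairs.

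First, your aside that $G_n^2$ has no $4$-cycles is false for $n\ge 2$: the triangles $\{a,b,c\}$ and $\{a,c,z\}$ of $G_2^2$ have the $4$-cycle $a,b,c,z$ as outer boundary. It is also unnecessary. Each hyperedge of $J_2^\Gamma$ is the complement of a single vertex of the square. So a square taking three or four distinct values always has a corner carrying three distinct values, and that corner cannot land in a $2$-element hyperedge, whatever the cycle structure of $G$.

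Second, the computation you defer is one line. Modulo degenerate cubes, $\partial\sigma_{abcd}=\sigma_{ab}+\sigma_{bd}-\sigma_{ac}-\sigma_{cd}$. Hence a cube supported on $\{u,v\}$ has boundary in the span of $\sigma_{uv},\sigma_{vu}$. Being a cycle, that boundary is an integer multiple of $\sigma_{uv}+\sigma_{vu}$, and $\sigma_{uvuu}$ realizes the generator.
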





    

\begin{proof}   We first note that the hypergraph $G_n^2$ has    $2n+1$ hyperedges, it has $n+2$ vertices. Since $G_{n}^2$ has $n+2$ vertices, 
    \[C_0^\Gamma(G_{n}^2)=Z_0^\Gamma(G_{n}^2)\cong \mathbb{Z}^{n+2}. \] 
    Since $G_{n}^2$  is a 1-connected hypergraph we have $H_0^\Gamma(G_{n}^2)\cong\mathbb{Z}$, 
    therefore
            \[ B_0^\Gamma(G_{n}^2)\cong \mathbb{Z}^{n+1}.
           \]
           Furthermore, $G_{n}^2$ has $2n+1$  hyperedges. 
           Since each
hyperedge with vertices $a,b$ gives rise to two generators
$\sigma_{ab}$ and $\sigma_{ba}$ of $C_1^\Gamma(G_{n+1}^2)$, we have,
           \[ C_1^\Gamma(G_{n}^2)\cong \mathbb{Z}^{4n+2}, \] which implies \[Z_1^\Gamma(G_{n}^2)\cong \mathbb{Z}^{3n+1}.\] 
           For each hyperedge with vertices $a,b$, boundary of the singular $(2,\Gamma)$-hypercube $\sigma_{abaa}$ equals $\sigma_{ab}+\sigma_{ba}$. 
           Therefore, two generators of 
           $C_1^\Gamma(G_{n}^2)$
           contribute to one generator
           \[
           \sigma_{ab}+\sigma_{ba}\in B_1^
           \Gamma(G_{n}^2)\]
           Since the hyperedges are distinct, these
relations are independent,
           therefore the rank of
           $B_1^\Gamma(G_{n}^2)$ equals half the rank of $C_1^\Gamma(G_{n}^2)$. 
           We have \[  B_1^\Gamma(G_{n}^2)\cong \mathbb{Z}^{2n+1}.\]  
           Hence, \[H_1^\Gamma(G_{n}^2)=\frac{\mathbb{Z}^{3n+1}}{\mathbb{Z}^{2n+1}}\cong \mathbb{Z}^{n}.\]

           \end{proof}

The preceding result can be extended to $m$-uniform hypergraphs: for $m\ge3$ and $n\ge1$, we obtain
\[
H_1^\Gamma(G_n^m)\cong\mathbb{Z}^n.
\]

The example below shows that the first $\Gamma$-homology group of a hypergraph $G$ consisting of a non-uniform $4$-cycle is not trivial, whereas the first embedded homology group, $H_1(\text{inf}(G))$, of the hypergraph in example \ref{general 3-cycle}, is trivial. 

\begin{ex}[A non-uniform $4$-cycle]\label{general 3-cycle}
Let $G$ be the hypergraph with
\[
V=\{a,b,c,d,e\}, \quad E=\{\{a,b,c\},\{b,e\},\{c,d\},\{d,e\}\}.
\]
A direct computation shows
\[
H_1^\Gamma(G)\cong \mathbb{Z}.
\]
\end{ex}

\begin{rem}
The above example \ref{general 3-cycle} shows that $\Gamma$-homology captures not only uniform cycles but also non-uniform cycle structures. Furthermore,  examples \ref{eg gamma 3-cycle} and \ref{eg box and x 1-hom} show that the $\Gamma$-homology and the $\times$-homology, respectively, for graphs (2-uniform hypergraphs), detects $3$-cycles and $4$-cycles, unlike discrete cubical homology. 
\end{rem}

\section{Homotopy for Hypergraphs}\label{hmtpy}

In this section, we introduce the notion of homotopy between hypergraph morphisms, and prove that $\Box$-homology is invariant under homotopy. We also discuss structural properties of $\Gamma$-homology and present examples illustrating its behaviour.




\subsection{Homotopy of Hypergraph Morphisms}

\begin{Def}
Let $G=(V,E)$ and $G'=(V',E')$ be hypergraphs, and let $f,g:G\rightarrow G'$ be hypergraph morphisms. We say that $f$ is \emph{$m$-step homotopic} to $g$, denoted by $f\simeq_m g$, if there exists a hypergraph morphism
\[
F:G\times J_m \rightarrow G'
\]
such that
\begin{itemize}
	\item $F(v,0)=f(v)$ for all $v\in V$,
	\item $F(v,m)=g(v)$ for all $v\in V$,
\end{itemize}
where $J_m$ is the hypergraph with vertex set $\{0,1,\ldots,m\}$ and hyperedges $\{\{0,1\},\ldots,\{m-1,m\}\}$.
\end{Def}

\begin{Def}
Two hypergraphs $G$ and $H$ are said to be \emph{homotopy equivalent} if there exist hypergraph morphisms $f:G\rightarrow H$ and $g:H\rightarrow G$ such that
\[
f\circ g \simeq Id_H \quad \text{and} \quad g\circ f \simeq Id_G.
\]
\end{Def}

\begin{Con}\label{dual HE}
	Let $G$ be a hypergraph and $G^*$ its dual. Then $G\simeq G^*$.
\end{Con}

\subsection{Homotopy Invariance of $\Box$-Homology}

\begin{thm}\label{thm 2}
Let $f,g:G\rightarrow H$ be hypergraph morphisms. If $f\simeq_m g$, then
\[
f_*= g_*:H_n^\Box(G)\rightarrow H_n^\Box(H).
\]
\end{thm}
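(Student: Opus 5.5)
We reduce to the case $m=1$ and then build a chain homotopy, as in discrete cubical homology for graphs. Suppose $F:G\times J_m\to H$ is an $m$-step homotopy from $f$ to $g$. For $k=0,\dots,m$ set $f_k(v)=F(v,k)$. Each $f_k$ is a hypergraph morphism: a hyperedge $e\times\{k\}$ of $G\times J_m$ is sent into a hyperedge of $H$. For consecutive $k-1,k$, the restriction of $F$ to $G\times\{k-1,k\}\cong G\times J_1$ is a hypergraph morphism, since the hyperedges of $G\times J_1$ are hyperedges of $G\times J_m$ (after reindexing the vertices $k-1,k$ as $0,1$). Thus $f_{k-1}\simeq_1 f_k$. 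If we prove $(f_{k-1})_*=(f_k)_*$ for each $k$, then $f_*=g_*$ follows by transitivity. So we may assume $m=1$ and $F:G\times J_1\to H$.

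For $m=1$, define $h_n:Q_n^\Box(G)\to Q_{n+1}^\Box(H)$ on a singular $(n,\Box)$-hypercube $\sigma:J_n^\Box\to G$ by
\[
h_n(\sigma)(v_1,\dots,v_{n+1})=F\big(\sigma(v_1,\dots,v_n),v_{n+1}\big).
\]
This equals $F\circ(\sigma\times\mathrm{id}_{J_1})$. The key observation is that $J_{n+1}^\Box=J_n^\Box\times J_1^\Box$ by the inductive definition, and $J_1^\Box=J_1$. Also, $\sigma\times\mathrm{id}$ is a morphism $J_n^\Box\times J_1\to G\times J_1$. A hyperedge $\{x\}\times\{0,1\}$ maps to $\{\sigma(x)\}\times\{0,1\}$, which is a hyperedge of $G\times J_1$. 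A hyperedge $e\times\{y\}$ maps into $e'\times\{y\}$, where $e'$ is a hyperedge containing $\sigma(e)$. Hence $h_n(\sigma)$ is a singular $(n+1,\Box)$-hypercube. If $\sigma$ is degenerate in coordinate $i$, then $h_n(\sigma)$ is degenerate in coordinate $i$, so $h_n$ descends to $C_n^\Box$.

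Next we compute faces. For $i\le n$ we have $A_i h_n\sigma=h_{n-1}A_i\sigma$ and $B_ih_n\sigma=h_{n-1}B_i\sigma$. In the last coordinate, $A_{n+1}h_n\sigma=f\circ\sigma$ and $B_{n+1}h_n\sigma=g\circ\sigma$. Therefore
\[
\partial_{n+1}h_n\sigma=h_{n-1}\partial_n\sigma+(-1)^{n+1}\big(f_\#\sigma-g_\#\sigma\big),
\]
which means $(-1)^{n+1}(\partial h+h\partial)$ with appropriate signs is a chain homotopy between $f_\#$ and $g_\#$. More precisely, set $\tilde h_n=(-1)^{n+1}h_n$ and check $\partial\tilde h+\tilde h\partial=f_\#-g_\#$, possibly with an overall sign. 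Applying this to a cycle $z$ gives $f_\#z-g_\#z=\partial\tilde h z$, so $f_*=g_*$.

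The main obstacle is verifying that $h_n(\sigma)$ is a $\Box$-hypercube. This is exactly where the Cartesian product structure of $J_{n+1}^\Box$ is needed, and it is why the argument fails for the $\Gamma$- and $\times$-cubes: their $(n+1)$-cubes are not products of an $n$-cube with $J_1$. A secondary point of care is the sign bookkeeping in the chain homotopy identity.
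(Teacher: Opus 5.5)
Your proposal is correct and follows the paper's own route: split the $m$-step homotopy into the one-step pieces $F_j(x,t)=F(x,j+t-1)$, then use the prism operator $\sigma\mapsto F_j\circ(\sigma\times\mathrm{id}_{J_1})$ as a chain homotopy, where the product structure $J_{n+1}^\Box=J_n^\Box\times J_1$ ensures these are $\Box$-cubes. Your sign twist $\tilde h_n=(-1)^{n+1}h_n$ is exactly right and gives $\partial\tilde h+\tilde h\partial=f_\#-g_\#$ with no extra overall sign; it is also necessary, because with the homotopy coordinate placed last, the untwisted identity $g_\#-f_\#=\partial\phi+\phi\partial$ displayed in the paper fails for $n\ge 1$.
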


\begin{proof}
	Define $F_j(x,t)=F(x,j+t-1)$, where $j\in [1,m]_{\mathbb{Z}}$, using the homotopy $F$. Construct chain homotopies $\phi_n$ by
	\[
	\phi_n(\sigma)(x_1,\ldots,x_{n+1})=F_j(\sigma(x_1,\ldots,x_n),x_{n+1}).
	\]
	A direct computation shows
	\[
	g_{\#}-f_{\#}=\partial_{n+1}\phi_n+\phi_{n-1}\partial_n.
	\]
	Hence $f_*=g_*$.
\end{proof}

\begin{prop}\label{homotopy equivalent}
	If $G\simeq H$, then
	\[
	H_n^\Box(G)\cong H_n^\Box(H).
	\]
\end{prop}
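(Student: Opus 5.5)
The plan is to deduce the statement from Theorem \ref{thm 2} together with the functoriality of $H_n^\Box$ established in Section \ref{ch.hom}. By definition of $G\simeq H$ there are hypergraph morphisms $f:G\to H$ and $g:H\to G$ with $g\circ f\simeq \mathrm{Id}_G$ and $f\circ g\simeq \mathrm{Id}_H$. The relation $\simeq$ here means $\simeq_m$ for some $m$. I would first make sure this relation is exactly what Theorem \ref{thm 2} requires. If the definition is instead read as the equivalence relation generated by the $\simeq_m$, I would handle that by noting that equality of induced maps is transitive. Then a finite chain $f=h_0\simeq_{m_1} h_1\simeq_{m_2}\cdots\simeq_{m_k} h_k=g$ still gives $f_*=g_*$ by applying Theorem \ref{thm 2} to each link.

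Next I would apply functoriality. Since $(f\circ\sigma)$ composed with $g$ equals $g\circ(f\circ\sigma)$, we get $(g\circ f)_\#=g_\#\circ f_\#$ on $C_n^\Box$, and hence $(g\circ f)_*=g_*\circ f_*$ on homology. The identity morphism induces the identity chain map, so $(\mathrm{Id}_G)_*=\mathrm{id}_{H_n^\Box(G)}$.

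Applying Theorem \ref{thm 2} to $g\circ f\simeq \mathrm{Id}_G$ gives $g_*\circ f_*=\mathrm{id}_{H_n^\Box(G)}$. Applying it to $f\circ g\simeq \mathrm{Id}_H$ gives $f_*\circ g_*=\mathrm{id}_{H_n^\Box(H)}$. Therefore $f_*:H_n^\Box(G)\to H_n^\Box(H)$ is an isomorphism with inverse $g_*$, for every $n$.

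There is essentially no obstacle beyond bookkeeping. The only point needing care is the convention for $\simeq$ discussed in the first paragraph, namely whether it allows composites of several homotopies of different lengths, and the transitivity remark there covers that case.
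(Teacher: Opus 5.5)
Your proposal is correct and is the argument the paper intends: the paper's proof just says the result follows immediately from Theorem \ref{thm 2}. You have filled in the omitted bookkeeping, namely functoriality $(g\circ f)_*=g_*\circ f_*$ and $(\mathrm{Id})_*=\mathrm{id}$, plus the remark that chains of homotopies of different lengths still give equal induced maps.
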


\begin{proof}
	Follows immediately from Theorem \ref{thm 2}.
\end{proof}

\subsection{Examples}

\begin{ex}[Homotopy of hypergraph morphisms]
Let $G=(V,E)$ and $H=(V',E')$ be hypergraphs with
\[
V=\{a,b,c,d\}, \quad V'=\{u,w,x,y,z\},
\]
\[
E=\{\{a,b,c\},\{a,c\},\{c\},\{a,c,d\}\}, \quad
E'=\{\{x,y,z\},\{u,x,y\},\{u,x,w\}\}.
\]
Define morphisms $f,g:G\to H$ by
\[
f(a)=w,\; f(b)=x,\; f(c)=u,\; f(d)=x,
\]
\[
g(a)=y,\; g(b)=z,\; g(c)=x,\; g(d)=u.
\]
Then $f\simeq_5 g$ via the homotopy given in the following table.
	{\fontsize{8.5pt}{10pt}\selectfont
    \[ \begin{tabular}{|c|c|c|c|c|c|}
		\hline
		$F(X,0)$& $F(X,1)$ & $F(X,2)$ & $F(X,3)$ & $F(X,4)$ & $F(X,5)$ \\
		\hline
		$F(a,0)=f(a)=w$&  $F(a,1)=x$ & $F(a,2)=y$& $F(a,3)= y$  & $F(a,4)= y$ &  $F(a,5)= y=g(a)$\\
		\hline
		$F(b,0)=f(b)=x$&  $F(b,1)=x$ & $F(b,2)=x$ & $F(b,3)=x$ & $F(b,4)=x $  & $F(b,5)= z=g(b)$ \\
		\hline
		$F(c,0)=f(c)=u$ & $F(c,1)=u$ & $F(c,2)=u$ & $F(c,3)=x$ & $F(c,4)= x$ & $F(c,5)=x=g(c)$ \\
		\hline
		$F(d,0)=f(d)=x$& $F(d,1)=x$ & $F(d,2)=x$ & $F(d,3)$=x & $F(d,4)=u$ & $F(d,5)=u=g(d)$ \\
		\hline
	\end{tabular}
	\]}

\end{ex}

The next example shows that $\Gamma$-homology and $\times$-homology are not homotopy invariant.
\\
\begin{ex}\label{eg not homotopy invariant}
Let $G$ be the hypergraph with vertex set $V=\{a,b,c,d\}$ and hyperedge set $E=\{\{a,b\},\{b,c\},\{c,d\},\{a,d\}\}$, and let $G'$ be the hypergraph with $V'=\{e\}$ and $E'=\{\{e\}\}$. Define morphisms $f:G\to G'$ by $f(V)=e$ and $g:G'\to G$ by $g(e)=a$ such that $G\simeq G'$. By Example~\ref{eg box and x 1-hom}, \[H_1^\times(G)\cong \mathbb{Z} \text{ and } H_1^\Gamma(G)\cong \mathbb{Z},\] whereas by example~\ref{eg single point hom} we have,
\[H_1^\times(G')=\{0\} \text{ and } H_1^\Gamma(G')=\{0\}.\]
\end{ex}

\section{Excision}\label{sec:excision}

Excision is a fundamental property of homology theories in algebraic topology, allowing one to compute homology groups by removing suitable subspaces. In this section, we investigate whether the homology theories $H_n^*(G)$, for $* \in \{\Gamma, \times, \Box\}$, satisfy an excision-like property.

We begin by introducing the necessary definitions.

Let $G = (V,E)$ be a hypergraph, and let $G_1 = (V_1,E_1)$ and $G_2 = (V_2,E_2)$ be subhypergraphs of $G$. We define $G_1 \cup G_2$ and $G_1 \cap G_2$ to be the induced subhypergraphs of $G$ with vertex sets
\[
V(G_1 \cup G_2) = V_1 \cup V_2, \qquad V(G_1 \cap G_2) = V_1 \cap V_2.
\]
Similarly, we define $G - G_1$ to be the induced subhypergraph of $G$ with vertex set $V \setminus V_1$.

\medskip

\begin{Def}[{\cite{HG_Bretto}}]\label{def:neighborhood}
Let $G = (V,E)$ be a hypergraph and let $x \in V$. The \emph{neighborhood} of $x$ in $G$ is the induced subhypergraph $N(x,G)$ with vertex set
\[
V(N(x,G)) = \{ y \in V \mid \exists\, e \in E \text{ such that } \{x,y\} \subseteq e \}.
\]
\end{Def}

\medskip

Let $G' = (V',E')$ be a subhypergraph of $G$. The \emph{closure} of $G'$ in $G$ is the induced subhypergraph $\cl(G')$ with vertex set
\[
V(\cl(G')) = \{ v \in V \mid \exists\, e \in E \text{ such that } v \in e \text{ and } e \cap V' \neq \emptyset \}.
\]

For a vertex $x \in V$, we write $\cl(x)$ for $\cl(\{x\})$. Note that
\[
V(N(x,G)) = V(\cl(x))
\]

\medskip

The \emph{interior} of $G'$ in $G$ is the induced subhypergraph $\int(G')$ with vertex set
\[
V(\int(G')) = V \setminus V(\cl(G - G')).
\]

\medskip

The hypergraph $J_n^*$ can be partitioned into induced subhypergraphs $S_j$ for any non-negative integer $j$, defined as follows:

\[
S_j = \{ (v_1, v_2, \ldots, v_n) \mid \sum_{i=1}^n v_i = j \}
\]
Note that $S_j=\emptyset$, for $j\geq n+1$, and
$S_0$ and $S_n$ are singletons.

\medskip

Let $G'$ be an induced subhypergraph of $G$, and let $i : G' \hookrightarrow G$ denote the inclusion. Then the induced map
\[
i_\#: C_n^*(G') \to C_n^*(G)
\]
is injective. Thus we identify $C_n^*(G')$ with a subgroup of $C_n^*(G)$ and define
\[
C_n^*(G,G') := \frac{C_n^*(G)}{C_n^*(G')}.
\]

The boundary maps descend to give homomorphisms
\[
\partial_n^{(*,\mathrm{rel})} : C_n^*(G,G') \to C_{n-1}^*(G,G'),
\]
and we define
\[
Z_n^*(G,G') := \ker \partial_n^{(*,\mathrm{rel})}, \qquad
B_n^*(G,G') := \operatorname{im} \partial_{n+1}^{(*,\mathrm{rel})}.
\]
The \emph{relative homology groups} are
\[
H_n^*(G,G') := \frac{Z_n^*(G,G')}{B_n^*(G,G')}.
\]

\subsection{Excision}

The following proposition records basic properties of closure and interior analogous to those in topology.

\begin{prop}\label{prop:closure-interior}
Let $G=(V,E)$ be a hypergraph and let $A,B$ be subhypergraphs of $G$. Then:
\begin{enumerate}
    \item $A \subseteq \cl(A)$ and $\int(A) \subseteq A$.
    \item $\int(G - A) = G - \cl(A)$ and $G - \int(A) = \cl(G - A)$.
    \item If $A \subseteq B$, then $\cl(A) \subseteq \cl(B)$ and $\int(A) \subseteq \int(B)$.
    \item $G = \int(A) \cup \int(B)$ if and only if $\cl(G - B) \subseteq \int(A)$.
\end{enumerate}
\end{prop}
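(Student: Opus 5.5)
The plan is to reduce everything to statements about vertex sets. Since $\cl$, $\int$, $G-A$, unions and intersections are all defined as induced subhypergraphs of $G$, an induced subhypergraph is determined by its vertex set, and for induced subhypergraphs inclusion is just inclusion of vertex sets. So I will write $V(A)$, $V(B)$ and prove each item as a set-theoretic identity in $V$, then transfer it back. For a general (non-induced) subhypergraph $A$, I read $A\subseteq \cl(A)$ as $V(A)\subseteq V(\cl(A))$, i.e.\ as containment in the induced subhypergraph on that vertex set. I also use the convention, implicit in the definition of neighbourhood, that every vertex lies in some hyperedge.

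For (1), take $v\in V(A)$ and any hyperedge $e\ni v$. Then $e\cap V(A)\neq\emptyset$, so $v\in V(\cl(A))$. Applying this to $G-A$ gives $V\setminus V(A)\subseteq V(\cl(G-A))$. Taking complements yields $V(\int(A))\subseteq V(A)$. For (3), if $V(A)\subseteq V(B)$, then any $e$ meeting $V(A)$ meets $V(B)$, so $\cl(A)\subseteq\cl(B)$. Moreover $V(G-B)\subseteq V(G-A)$, so $\cl(G-B)\subseteq \cl(G-A)$, and complementing gives $\int(A)\subseteq\int(B)$.

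For (2), compute $V(\int(G-A))=V\setminus V(\cl(G-(G-A)))$. The key observation is that $G-(G-A)$ is the induced subhypergraph on $V(A)$. Its closure depends only on the vertex set $V(A)$, because $\cl$ only tests $e\cap V'\neq\emptyset$. Hence this equals $V\setminus V(\cl(A))=V(G-\cl(A))$. The second identity in (2) follows directly from the definition: $V\setminus V(\int(A))=V(\cl(G-A))$. Some care is needed about the fact that $A$ may be non-induced while $G-(G-A)$ is induced. I expect this to be the only subtle point, and it is handled by noting that all constructions depend only on vertex sets.

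For (4), I will rewrite both sides in terms of vertex sets. Using (2), $V(\int(B))=V\setminus V(\cl(G-B))$. Then $G=\int(A)\cup\int(B)$ means $V=V(\int(A))\cup (V\setminus V(\cl(G-B)))$, which holds exactly when $V(\cl(G-B))\subseteq V(\int(A))$. This is elementary set algebra: $X\cup (V\setminus Y)=V \iff Y\subseteq X$.

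No step is a genuine obstacle; the main care point is the bookkeeping between subhypergraphs and their induced versions.
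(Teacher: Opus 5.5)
Your proposal is correct and is essentially the paper's argument: the paper also just unwinds the definitions as statements about vertex sets, writing out only $G-\int(A)=G-(G-\cl(G-A))=\cl(G-A)$ and calling the rest immediate. Your extra assumption that every vertex lies in some hyperedge is in fact necessary for item (1): a vertex $v$ in no hyperedge satisfies $v\notin\cl(\{v\})$, and $v\in\int(A)$ for any $A$ not containing it. So this is a genuine hypothesis that the paper's proof uses silently (the paper itself considers a vertex with no hyperedge), rather than something implicit in the definition of neighbourhood.
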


\begin{proof}
The first two statements follow directly from the definitions. For the identity
\[
G - \int(A) = \cl(G - A),
\]
note that
\[
\int(A) = G - \cl(G - A),
\]
and hence
\[
G - \int(A) = G - (G - \cl(G - A)) = \cl(G - A).
\]
The remaining statements follow immediately from the definitions.
\end{proof}

\medskip
The following result will be used in proving that excision holds for the $\times$-homology.

\begin{lem}\label{times hom lem}
Let $G=(V,E)$ be the hypergraph, $G_1$ and $G_2$ be any two subhypergraphs such that there is some positive integer $i$ with  $G=\int(G_1)\cup \int(G_2)$. Then for every $(n,\times)$-hyppercube $\sigma$, $Im(\sigma)\subset G_1$ or $Im(\sigma) \subset G_2.$ In particular, $C_n^\times(G)=C_n^\times( G,\mathbb{U})$, where $\mathbb{U}=\{G_1,G_2\}$.
\end{lem}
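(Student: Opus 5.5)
The plan is to exploit the fact that the $\times$-hypercube $J_n^\times$ is the complete graph on $V_n$: a singular $(n,\times)$-hypercube forces every pair of its image vertices to lie in a common hyperedge. Combined with the definition of interior, this traps the whole image inside one of $G_1,G_2$.

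\textbf{Step 1 (image is a clique).} Let $\sigma\colon J_n^\times\to G$ be a singular $(n,\times)$-hypercube. If $n=0$, the image is a single vertex $x$. Since $V(G)=V(\int(G_1))\cup V(\int(G_2))$ and $\int(G_i)\subseteq G_i$ by Proposition~\ref{prop:closure-interior}(1), $x$ lies in $G_1$ or $G_2$. For $n\ge 1$, every pair $\{u,v\}\subseteq V_n$ is a hyperedge of $J_n^\times$. Because $\sigma$ is a morphism, $\{\sigma(u),\sigma(v)\}$ is contained in some hyperedge of $G$. Hence any two image vertices share a hyperedge, and in particular every image vertex lies in some hyperedge.

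\textbf{Step 2 (interior points have their neighbourhood inside).} Fix an image vertex $x=\sigma(u_0)$. By hypothesis $x\in\int(G_1)$ or $x\in\int(G_2)$; say $x\in\int(G_1)$. I claim $V(N(x,G))\subseteq V_1$. Suppose instead $y\in e\in E$ with $x\in e$ and $y\notin V_1$. Then $y$ is a vertex of $G-G_1$, and $e$ meets $V(G-G_1)$ while containing $x$. So $x\in V(\cl(G-G_1))$, which means $x\notin\int(G_1)$, a contradiction.

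\textbf{Step 3 (conclude).} By Step 1, every image vertex $\sigma(v)$ shares a hyperedge with $x$, so $\sigma(v)\in V(N(x,G))$; this includes $x$ itself, since $x$ lies in a hyperedge. By Step 2, $\operatorname{Im}(\sigma)\subseteq V_1$. We also need $\sigma$ to be a hypergraph morphism into $G_1$, not merely a map landing in $V_1$. Each hyperedge $h$ of $J_n^\times$ satisfies $\sigma(h)\subseteq e$ for some $e\in E$, so $\sigma(h)\subseteq e\cap V_1$. Thus $\sigma$ factors through the induced subhypergraph on $V_1$. I will read $G_1,G_2$ as induced on their vertex sets, consistent with the section's conventions for $\cup$, $\cap$ and $-$.

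For the ``in particular'' statement, $C_n^\times(G,\mathbb{U})$ is the subgroup $C_n^\times(G_1)+C_n^\times(G_2)$ of $C_n^\times(G)$, generated by hypercubes lying in some member of $\mathbb U$. Every generator of $C_n^\times(G)$ lies in one of these, and the degenerate subgroups are compatible, so the two groups coincide.

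\textbf{Main obstacle.} The only delicate point is the subhypergraph-versus-induced issue in Step 3, namely verifying that $\sigma$ is genuinely a morphism into $G_i$. If $G_i$ were non-induced, I would argue via $\int(G_i)$ instead, replacing $G_i$ by the induced subhypergraph on $V_i$ without changing interiors.
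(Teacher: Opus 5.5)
Your proof is correct and follows essentially the same route as the paper: the paper takes the image of the base vertex $S_0$, places it in $\int(G_1)$ say, and uses that every vertex of the complete graph $J_n^\times$ is adjacent to $S_0$, so the whole image lies in the neighbourhood of an interior point and hence in $V_1$ (the paper's induction on $n$ is vacuous). Your version also spells out two points the paper leaves implicit: why the neighbourhood of an interior point lies in $V_1$, and why $\sigma$ is a genuine morphism into the induced subhypergraph on $V_1$.
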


\begin{proof} Note that by definition, $C_n^\times(G,\mathbb{U}) \subset C_n^\times(G)$. For converse we proceed through induction,
For $n=0$, let $\sigma \in  C_0^\times(G)$. Clearly, either $Im(\sigma) \in \int(G_1)$ or $Im(\sigma) \in \int(G_2)$. It implies $C_0^\times(G)\subseteq C_0^\times(G,\mathbb{U})$. Let us consider for $n=k$, we have $C_n^\times(G)\subset C_n^\times(G,\mathbb{U})$.

Induction step: $n=k+1$, Assume without loss of generality that $\sigma(S_0) \in \int(G_1).$ Note that,  $\sigma(S_0 \cup S_1 \cup S_2 \cup\ldots \cup S_{k+1}) \subseteq V_1.$ Since all of the vertices of $S_i's$ are in neighbors of $\sigma(S_0)$ in $J^\times_{k+1}$. Thus, $Im(\sigma)\subset G_1$, as desired. 
\end{proof}

\begin{thm}
(Excision for $\times$-homology)\\
Let $G=(V,E)$ be hypergraph. Let $G_1=(V_1,E(V_1))$ and $G_2=(V_2,E(V_2))$ are subhypergraph such that $G_1\subseteq G_2$ and $\cl(G_1)\subseteq \int(G_2)$. Then the inclusion $(G-G_1,G_2-G_1)\rightarrow (G,G_2)$ induces isomorpohisms $H_{n}^{\times}(G-G_1,G_2-G_1)\cong H_{n}^{\times}(G,G_2)$ for all $n$. Equivalently, for interior cover $\{A,B\}$ of hypergraph $G=(V,E)$ the inclusion $(B,A\cap B)\rightarrow (G,A)$ induces isomorphisms $H_{n}^{\times}(B,A\cap B)\cong H_{n}^{\times}(G,A), \forall n$.
\end{thm}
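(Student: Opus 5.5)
We first reduce the first formulation to the second. Put $A=G_2$ and $B=G-G_1$. Then $A\cap B=G_2-G_1$, and by Proposition \ref{prop:closure-interior}(2) we have $\int(B)=\int(G-G_1)=G-\cl(G_1)$. The hypothesis $\cl(G_1)\subseteq\int(G_2)$ therefore gives $G=\int(A)\cup\int(B)$. So it suffices to prove that for an interior cover $\{A,B\}$ the inclusion $(B,A\cap B)\to(G,A)$ induces isomorphisms on $H_n^\times$. Because all subhypergraphs involved are induced, the identifications $C_n^\times(A),C_n^\times(B),C_n^\times(A\cap B)\subseteq C_n^\times(G)$ used to define relative homology are legitimate.

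Next we use Lemma \ref{times hom lem}. It says every singular $(n,\times)$-hypercube in $G$ has image in $A$ or in $B$, so $C_n^\times(G)=C_n^\times(A)+C_n^\times(B)$ as subgroups of $C_n^\times(G)$. I would check that this holds at the level of the quotients by degenerate cubes. The subgroup generated by degenerate cubes is spanned by individual degenerate cubes, each of which already lies in $A$ or in $B$, so it causes no trouble.

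The key identity we need is $C_n^\times(A)\cap C_n^\times(B)=C_n^\times(A\cap B)$. A basis of $C_n^\times(G)$ is given by the nondegenerate singular cubes. A chain lies in $C_n^\times(A)$ exactly when its support consists of cubes with image in $A$. Hence a chain lying in both subgroups is supported on cubes with image in $V(A)\cap V(B)$. Since $A\cap B$ is the induced subhypergraph on that vertex set, any such cube is a hypergraph morphism into $A\cap B$: the $\times$-hypercube is a graph, and each edge maps into some hyperedge of $G$, whose intersection with $V(A)\cap V(B)$ is a hyperedge of $A\cap B$.

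The second isomorphism theorem then gives an isomorphism of chain complexes
\[
\frac{C_n^\times(B)}{C_n^\times(A\cap B)}=\frac{C_n^\times(B)}{C_n^\times(A)\cap C_n^\times(B)}\xrightarrow{\ \cong\ }\frac{C_n^\times(A)+C_n^\times(B)}{C_n^\times(A)}=\frac{C_n^\times(G)}{C_n^\times(A)}.
\]
This map is induced by the inclusion and commutes with $\partial$, since the boundary maps are restrictions of the boundary map of $G$. A chain isomorphism induces isomorphisms on homology, so $H_n^\times(B,A\cap B)\cong H_n^\times(G,A)$ for all $n$. Note that, unlike the topological case, no subdivision or long exact sequence argument is needed, because Lemma \ref{times hom lem} gives equality of chain groups on the nose.

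I expect the main obstacle to be the justification of Lemma \ref{times hom lem} and of the induced-subhypergraph bookkeeping. Because $J_n^\times$ is the complete graph, every vertex of the cube is adjacent to $S_0$. So if $\sigma(S_0)\in\int(A)$, every other vertex lies in a common hyperedge with $\sigma(S_0)$ and hence in $\cl(\sigma(S_0))$. I would verify that $\cl(\int(A))\subseteq A$: a vertex outside $A$ lies in $G-A$, so every hyperedge meeting it has all its vertices in $\cl(G-A)=G-\int(A)$. This shows the whole image lies in $A$, and then as an induced subhypergraph $\sigma$ factors through $A$.
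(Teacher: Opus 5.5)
Your proposal is correct and follows the paper's route. The paper proves Lemma~\ref{times hom lem} by the same observation you use (every vertex of $J_n^\times$ is adjacent to $S_0$, so the whole image lies in $G_1$ or $G_2$). It then obtains $C_n^\times(G)=C_n^\times(A)+C_n^\times(B)$ and $C_n^\times(A)\cap C_n^\times(B)=C_n^\times(A\cap B)$, and applies the second isomorphism theorem, exactly as written out in the proof of Theorem~\ref{gamma excision}. Your explicit checks fill in details the paper leaves implicit: the reduction via $\int(G-G_1)=G-\cl(G_1)$, the basis of nondegenerate cubes, and $\cl(\int(A))\subseteq A$.
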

		\begin{proof}
The result follows directly from Lemma \ref{times hom lem} by a standard argument analogous to the classical case.
\end{proof}
\medskip

Next, we study excision for $\Gamma$-homology. We show that the classical excision property fails in this setting, and therefore formulate a weaker excision-like property adapted to $\Gamma$-homology.
\begin{lem}\label{lem:Gamma-localization}
Let $G=(V,E)$ be a hypergraph and let $\mathbb{U}=\{G_1,G_2\}$ be a family of subhypergraphs such that
\[
G = \int(G_1) \cup \int(G_2).
\]
Then for $n \in \{0,1,2,3,4\}$,
\[
C_n^\Gamma(G) = C_n^\Gamma(G,\mathbb{U}).
\]
\end{lem}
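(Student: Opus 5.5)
The plan is to show the stronger statement that, for $n\le 4$, every singular $(n,\Gamma)$-hypercube $\sigma\colon J_n^\Gamma\to G$ satisfies $\operatorname{Im}(\sigma)\subseteq G_1$ or $\operatorname{Im}(\sigma)\subseteq G_2$. This is the analogue of Lemma \ref{times hom lem}. Since $C_n^\Gamma(G,\mathbb{U})\subseteq C_n^\Gamma(G)$ by definition, the equality $C_n^\Gamma(G)=C_n^\Gamma(G,\mathbb{U})$ then follows at once. Degenerate cubes cause no trouble, because the statement is about individual generators.

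\textbf{Step 1 (a geometric fact about $J_n^\Gamma$).} Any two vertices $v,w\in V_n$ with $|v-w|\le 2$ lie in a common hyperedge of $J_n^\Gamma$. If $|v-w|\le 1$, take $e_v$. If $w=v+\epsilon_i+\epsilon_j$ (coordinates flipped), take $a$ to be $v$ with only coordinate $i$ flipped. Then $|v-a|=|w-a|=1$, so $v,w\in e_a$. Since $\sigma$ is a morphism, $\sigma(e_a)$ is contained in a hyperedge of $G$. Hence $\sigma(w)\in V(\cl(\sigma(v)))=V(N(\sigma(v),G))$ whenever $|v-w|\le 2$.

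\textbf{Step 2 (using the cover).} Suppose $\operatorname{Im}(\sigma)\not\subseteq G_2$, and choose $v$ with $\sigma(v)\in G-G_2$. By Proposition \ref{prop:closure-interior}(4), $\cl(G-G_2)\subseteq\int(G_1)$. Step 1 then gives $\sigma(u)\in\int(G_1)$ for every $u$ with $|u-v|\le 2$.

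Next recall $\int(G_1)=G-\cl(G-G_1)$. So a vertex $y\in\int(G_1)$ shares no hyperedge with any vertex outside $G_1$; that is, $V(N(y,G))\subseteq V_1$.

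Now let $w\in V_n$ be arbitrary. Since $n\le 4$, we have $|w-v|\le 4$, so we can pick $u$ on a shortest Hamming path from $v$ to $w$ with $|u-v|\le 2$ and $|w-u|\le 2$. Then $\sigma(u)\in\int(G_1)$, and by Step 1, $\sigma(w)\in N(\sigma(u),G)\subseteq G_1$. Thus $\operatorname{Im}(\sigma)\subseteq G_1$. The image is a set of vertices of the induced subhypergraph, so $\sigma$ factors through $G_1$ as a morphism. This is automatic for induced subhypergraphs, and in general I would interpret $C_n^\Gamma(G,\mathbb U)$ via vertex images as in Lemma \ref{times hom lem}.

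\textbf{Main obstacle.} The delicate point is exactly the distance bookkeeping. The argument uses the "two-step" reach of a $\Gamma$-hyperedge (Step 1) twice: once to land in $\int(G_1)$ and once to escape only into $G_1$. This covers Hamming distance at most $4$, which is precisely why the lemma is restricted to $n\in\{0,1,2,3,4\}$. For $n\ge5$, a vertex at distance $5$ from $v$ is not controlled, and the argument breaks down, consistent with the announced failure of classical excision for $\Gamma$-homology. I would also double-check the case $n=0$ separately (trivial, since $\sigma(S_0)$ lies in $\int(G_1)$ or $\int(G_2)$).
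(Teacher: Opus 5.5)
Your proof is correct. It streamlines the paper's argument and repairs a gap in it.

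\textbf{The paper's route.} It fixes the base vertex $S_0=\{(0,\dots,0)\}$ and assumes $\sigma(S_0)\subseteq V(\int(G_1))$. It pushes this through the layers to get $\sigma(S_1\cup S_2)\subseteq V(G_1)$. Only for $n\in\{3,4\}$ does it split on whether some $x\in S_3\cup S_4$ maps outside $G_1$; if so, it passes to $\cl(G-G_1)\subseteq\int(G_2)$ and concludes $\operatorname{Im}(\sigma)\subseteq G_2$.

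\textbf{Your route.} You discard the base-point step and run only the second half, measured by Hamming distance from the offending vertex. The radius-$2$ ball around $v$ lands in $\int(G_1)$. Hence the radius-$4$ ball, which is all of $V_n$ when $n\le 4$, lands in $G_1$.

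\textbf{What this buys.} First, it makes plain that $n\le 4$ is exactly the condition that the diameter of $J_n^\Gamma$ is at most $2+2$, one unit of two-step reach for each of the two passes. Second, it gets the bookkeeping right where the paper's layer formulation slips. From a single $x\in S_3\cup S_4$ with $\sigma(x)\notin V(G_1)$, the paper asserts $\sigma(S_2\cup S_3\cup S_4)\subseteq V(\cl(G-G_1))$. But for $n=4$ and $x=(1,1,1,0)$, the vertex $(0,0,1,1)\in S_2$ is at distance $3$ from $x$. Nothing then places its image in $\cl(\sigma(x))$, and on a path hypergraph it can in fact land in $\int(G_1)$. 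The lemma is still true, and your observation is the repair: only the radius-$2$ ball around the offending vertex is needed before the second application of the two-step reach.

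Your argument also never uses $\int(G_1)\subseteq G_1$, which can fail for vertices lying in no hyperedge. You only use that every hyperedge through a point of $\int(G_1)$ lies in $V(G_1)$, and every image point of $\sigma$ lies in some hyperedge.

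The paper's layered presentation gains only uniformity with the other localization lemmas of Section \ref{sec:excision}, which are organized around a fixed base vertex.
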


\begin{proof}
By definition, $C_n^\Gamma(G,\mathbb{U}) \subseteq C_n^\Gamma(G)$. We prove the reverse inclusion.

Let $\sigma : J_n^\Gamma \to G$ be an $(n,\Gamma)$-hypercube. We show that the image of $\sigma$ is contained entirely in $G_1$ or $G_2$.

For $j \ge 0$, recall that
\[
S_j = \{(v_1,\dots,v_n) \in V(J_n^\Gamma) \mid \textstyle\sum_{i=1}^n v_i = j\}.
\]
Note that $S_0$ consists of a single vertex.

Since $G = \int(G_1) \cup \int(G_2)$, we may assume without loss of generality that
\[
\sigma(S_0) \subseteq V(\int(G_1)).
\]
By Definition~\ref{def:neighborhood}, all vertices in $\sigma(S_1)$ and $\sigma(S_2)$ are neighbors of $\sigma(S_0)$, and hence lie in $V(G_1)$. Therefore,
\[
\sigma(S_0 \cup S_1 \cup S_2) \subseteq V(G_1).
\]

If $n \le 2$, it follows immediately that $\operatorname{Im}(\sigma) \subseteq G_1$.

Now suppose $n \in \{3,4\}$. If
\[
\sigma(S_3 \cup S_4) \subseteq V(G_1),
\]
then $\operatorname{Im}(\sigma) \subseteq G_1$, as required.

Otherwise, there exists $x \in S_3 \cup S_4$ such that $\sigma(x) \notin V(G_1)$. Then $\sigma(x) \in V(G \setminus G_1)$, and by Proposition~\ref{prop:closure-interior},
\[
\sigma(S_2 \cup S_3 \cup S_4) \subseteq V(\cl(G - G_1)).
\]
Since $\cl(G - G_1) \subseteq \int(G_2)$, we obtain
\[
\sigma(S_2 \cup S_3 \cup S_4) \subseteq V(\int(G_2)).
\]
Using the definition of interior again, it follows that
\[
\sigma(S_0 \cup S_1) \subseteq V(G_2).
\]
Hence $\operatorname{Im}(\sigma) \subseteq G_2$.

This proves that every $(n,\Gamma)$-hypercube lies in $C_n^\Gamma(G,\mathbb{U})$, and therefore
\[
C_n^\Gamma(G) \subseteq C_n^\Gamma(G,\mathbb{U}).
\]
\end{proof}

Next we show that the above lemma fails for the case of  $(5,\Gamma)$-hypercube. In particular, we have a hypergraph $G$ and subhypergraphs $G_1$ and $G_2$, 
with $G=\int(G_1)\cup \int(G_2)$ and singular $(5,\Gamma)$-hypercube $\sigma:J_5^\Gamma\to G$, so that the image of $\sigma$ is not entirely contained in $G_1$ or $G_2$.
\begin{ex}
	
	Let $G$ be the hypergraph with vertex set $V=\{v_1,\ldots, v_{6}\}$ and set of hyperedges $E=\{\{v_1,v_2,v_3\}, \{v_2, v_3,v_4\},\{v_3,v_4,v_5\}, \{v_4,v_5,v_6\},\}.$ We take $G_1=\{v_1,\ldots, v_5\}$ and $G_2=\{v_2,\ldots, v_{6}\}$. Note that $\int(G_1)=\{v_1,v_2,v_3\}$ and $\int(G_2)=\{v_4,v_5,v_{6}\}$ and thus $G=\int(G_1)\cup \int(G_2)$. Now define $\sigma: J_5^\Gamma\rightarrow G$ by $\sigma(S_0)=v_1$, $\sigma(S_1)=v_2$, $\sigma(S_2)=v_3$, $\sigma(S_3)=v_4$, $\sigma(S_4)=v_5$ and $\sigma(S_5)=v_{6}$. Note that $\sigma $ is a hypergraph morhism and $Im(\sigma) \notin G_i$ for $i=\{1,2\}$. 
\end{ex}

\medskip

\begin{thm}\label{gamma excision}
	(Excision-like for $\Gamma$-homology)\\
	Let $G=(V,E)$ be hypergraph. Let $G_1=(V_1,E_1)$ and $G_2=(V_2,E_2)$ are  subhypergraphs such that $G_1\subseteq G_2$ and $\cl(G_1)\subseteq \int(G_2)$. Then the inclusion $(G-G_1,G_2-G_1)\rightarrow (G,G_2)$ induces isomorpohisms $H_{n}^\Gamma(G-G_1,G_2-G_1)\cong H_{n}^\Gamma(G,G_2)$ for all $n<4$. Equivalently, for interior cover $\{A,B\}$ of hypergraph $G=(V,E)$ the inclusion $(B,A\cap B)\rightarrow (G,A)$ induces isomorphisms $H_{n}^\Gamma(B,A\cap B)\cong H_{n}^\Gamma(G,A), \forall n<4$.
\end{thm}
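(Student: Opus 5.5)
The plan is to follow the classical proof of excision, with Lemma~\ref{lem:Gamma-localization} playing the role of the small-chains theorem. The lemma only gives localization of chains in degrees $n\le 4$. That limitation is the reason the conclusion is restricted to $n<4$.

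First I reduce to the interior-cover formulation. Set $B=G-G_1$ and $A=G_2$. From $\cl(G_1)\subseteq\int(G_2)$ and Proposition~\ref{prop:closure-interior}(2), we get $\int(B)=G-\cl(G_1)$. Together with $\int(A)\supseteq\cl(G_1)$, this gives $G=\int(A)\cup\int(B)$. It also gives $A\cap B=G_2-G_1$. So it suffices to prove the second form of the statement.

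With $\mathbb{U}=\{A,B\}$, write $C_n^\Gamma(G,\mathbb{U})=C_n^\Gamma(A)+C_n^\Gamma(B)$ for the subgroup of $C_n^\Gamma(G)$ spanned by cubes whose image lies in $A$ or in $B$. These subgroups form a subcomplex, because faces of a cube lying in $A$ or $B$ lie there too. By Lemma~\ref{lem:Gamma-localization}, $C_n^\Gamma(G)=C_n^\Gamma(G,\mathbb{U})$ for $0\le n\le 4$.

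Next I compare the two relative complexes through the natural map
\[
\frac{C_n^\Gamma(B)}{C_n^\Gamma(A\cap B)}\longrightarrow \frac{C_n^\Gamma(A)+C_n^\Gamma(B)}{C_n^\Gamma(A)} .
\]
This map is an isomorphism in every degree by the second isomorphism theorem. To apply it I need
\[
C_n^\Gamma(A)\cap C_n^\Gamma(B)=C_n^\Gamma(A\cap B).
\]
This holds because the complexes are free on nondegenerate cubes, and a cube with image in both $A$ and $B$ is a cube into the induced subhypergraph $A\cap B$. Here I use that induced subhypergraphs make any vertex map landing in $A\cap B$ a morphism.

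For $n\le 4$, the right-hand side equals $C_n^\Gamma(G)/C_n^\Gamma(A)=C_n^\Gamma(G,A)$. So the inclusion induces a chain map $C_\bullet^\Gamma(B,A\cap B)\to C_\bullet^\Gamma(G,A)$ that is an isomorphism in degrees $0,\dots,4$.

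The last step is a truncation argument. $H_n$ depends only on $C_{n+1}\to C_n\to C_{n-1}$. A chain map that is an isomorphism in degrees $n-1,n,n+1$ and commutes with the boundaries therefore induces an isomorphism on $H_n$. For $n\le 3$, degrees $n-1,n,n+1$ all lie in $\{0,\dots,4\}$, which gives the claimed isomorphism for all $n<4$.

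I expect the main obstacle to be the identification of the intersection, and more generally making precise that the quotient by degenerate cubes interacts well with sums and intersections of subcomplexes. The key point is that $D_n^\Gamma(A)=D_n^\Gamma(G)\cap Q_n^\Gamma(A)$, which holds because degeneracy is a property of the cube itself. Once that is checked, everything is a basis-level statement.
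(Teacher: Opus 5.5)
Your proposal is correct and follows essentially the same route as the paper. Lemma~\ref{lem:Gamma-localization} gives $C_n^\Gamma(G)=C_n^\Gamma(A)+C_n^\Gamma(B)$ for $n\le 4$, and the second isomorphism theorem then identifies $C_n^\Gamma(B,A\cap B)$ with $C_n^\Gamma(G,A)$. You also spell out several points the paper leaves implicit: the reduction from the first form of the statement to the interior-cover form, the identity $C_n^\Gamma(A)\cap C_n^\Gamma(B)=C_n^\Gamma(A\cap B)$ for induced subhypergraphs, and the truncation argument showing why chain isomorphisms in degrees $0,\dots,4$ give the homology isomorphism exactly for $n<4$.
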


Note that the equivalence of the two statements follows by setting $B = G-G_1$ and $G_2=G-B$. Then $A\cap B=G_2-G_1$. 

\begin{proof}\label{excision proof}
	We prove the second statement, involving an interior cover of hypergraph $G=(V,E)$, $\mathbb{U}=\{A,B\}$. By $\ref{lem:Gamma-localization}$ we have $C_{n}^\Gamma(G,\mathbb{U})=C_{n}^\Gamma(G)$. Note that $C_{n}^\Gamma(G,\mathbb{U})=C_{n}^\Gamma(A)+C_{n}^\Gamma(B)$ and $C_{n}^\Gamma(A\cap B)=C_{n}\Gamma(A)\cap C_{n}^\Gamma(B)$. Furthermore, the map \[\frac{C_{n}^\Gamma(B)}{C_{n}^\Gamma(A)\cap C_{n}^\Gamma(B)}\rightarrow \frac{C_{n}^\Gamma(A)+C_{n}^\Gamma(B)}{C_{n}^\Gamma(A)}\] induced by inclusion is an isomorphism. Thus,
	\[C_{n}^\Gamma(B,A\cap B)=\frac{C_{n}^\Gamma(B)}{C_{n}^\Gamma(A\cap B)}\cong \frac{C_{n}^\Gamma(A)+C_{n}^\Gamma(B)}{C_{n}^\Gamma(A)}\cong \frac{C_{n}^\Gamma(G,\mathbb{U})}{C_{n}^\Gamma(A)}\cong \frac{C_{n}^\Gamma(G)}{C_{n}^\Gamma(A)}=C_{n}^\Gamma(G,A).\] It follows $H_{n}^\Gamma(G,A)\cong H_{n}^\Gamma(B,A\cap B)$.	
\end{proof}

\medskip

\begin{Def}
Let $G=(V,E)$ be the hypergraph, we define $k$-level closure and $k$-level interior for any subhypergraph $G'$, for $k\ge 0$, to be induced subhypergraph of $G$ whose vertex sets are given by, 

\begin{equation*} \begin{split}
    V(\cl^0(G'))=V', & ~~~~ V(\cl^{i}(G')) = V(\cl^{i-1}(\cl(G'))), ~~\text{for some positive integer } i,\\
V(\int^0(G')) =V', & ~~~~ V(\int^i(G'))=V(\int^{i-1}(\int(G'))), ~~ \text{for some positive integer } i.\end{split}
\end{equation*}

\medskip

\end{Def}
By using the above definitions, we have the following lemma.
\begin{lem}\label{iterative int and cl}
Let $G=(V, E)$ be the hypergraph and $G_1$ and $G_2$ be any subhypergraph of $G$. Then:
\begin{enumerate}
	\item ${\cl^{i}(G')} \subseteq {\cl^{i+1}(G')}$, $\int^{i}(G')\subseteq \int^{i-1}(\int(G'))$
	\item  $\int^i(G-G')=G-\cl^i(G')$, $G-\int^i(G')=\cl(G-G').$
	\item $G=\int^i(G_1)\cup \int^i(G_2)$ if and only if $\cl^i(G-G_2)\subseteq \int^i(G_1).$
\end{enumerate}
\end{lem}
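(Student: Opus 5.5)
We prove the lemma by induction on $i$, with Proposition~\ref{prop:closure-interior} as the base case and the engine of each inductive step. Everything reduces to statements about vertex sets, since all hypergraphs involved are induced subhypergraphs of $G$. Two earlier facts do most of the work. First, $\cl$ and $\int$ are monotone (Proposition~\ref{prop:closure-interior}(3)). Second, they are interchanged by complementation: $\int(G-A)=G-\cl(A)$ and $G-\int(A)=\cl(G-A)$ (Proposition~\ref{prop:closure-interior}(2)). We also read $\cl^i$ and $\int^i$ as the $i$-fold iterates of $\cl$ and $\int$. The recursive definition gives exactly this: $\cl^i(G')=\cl^{i-1}(\cl(G'))$, so by induction $\cl^i=\cl\circ\cdots\circ\cl$, and likewise for $\int$.

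For (1), the second inclusion is immediate: $\int^i(G')=\int^{i-1}(\int(G'))$ holds by definition, so the inclusion is in fact an equality. For the first, apply Proposition~\ref{prop:closure-interior}(1) to $A=\cl^i(G')$, giving $\cl^i(G')\subseteq\cl(\cl^i(G'))=\cl^{i+1}(G')$. Equivalently, one can start from $G'\subseteq\cl(G')$ and apply monotonicity of $\cl^i$, which itself follows from (3) of the proposition by induction.

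For (2), we induct on $i$; the case $i=1$ is Proposition~\ref{prop:closure-interior}(2). Assume $\int^{i-1}(G-A)=G-\cl^{i-1}(A)$ for every $A$. Then
\[
\int^i(G-G')=\int^{i-1}(\int(G-G'))=\int^{i-1}(G-\cl(G'))=G-\cl^{i-1}(\cl(G'))=G-\cl^i(G').
\]
Here the inductive hypothesis is applied with $A=\cl(G')$. Substituting $G'\mapsto G-G'$ and taking complements gives $G-\int^i(G')=\cl^i(G-G')$. As printed, the right-hand side reads $\cl(G-G')$. That matches the case $i=1$, and the correct general form is $\cl^i$, so we would prove and state it with $\cl^i$. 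This is the one point where the statement itself needs care.

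For (3), complement the equality: $G=\int^i(G_1)\cup\int^i(G_2)$ holds iff $G-\int^i(G_2)\subseteq\int^i(G_1)$, which by (2) is iff $\cl^i(G-G_2)\subseteq\int^i(G_1)$. This is the same set-theoretic argument as in Proposition~\ref{prop:closure-interior}(4). The main obstacle is not any single step but keeping the iterated definitions straight, that is, checking that $\cl^i$ really is the $i$-fold composite so that the inductive hypothesis can be applied to $\cl(G')$ in place of $G'$. Once that is settled, each part is a one-line induction.
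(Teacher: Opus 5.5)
Your proof is correct and is essentially the paper's argument. The paper only says the statements follow from the definitions by straightforward induction, and you carry out exactly that induction, using Proposition~\ref{prop:closure-interior} and the identification of $\cl^i$, $\int^i$ with $i$-fold iterates.

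You are also right to replace $\cl(G-G')$ by $\cl^i(G-G')$ in the second identity of (2). As printed it fails for $i\ge 2$: for the path with hyperedges $\{v_1,v_2\},\{v_2,v_3\}$ and $V(G')=\{v_2,v_3\}$, one gets $\int^2(G')=\emptyset$, while $\cl(G-G')$ has vertex set $\{v_1,v_2\}$. The $\cl^i$ version is exactly what part (3) needs.
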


\begin{proof}
The statements follow directly from the definitions by straightforward inductive arguments.
\end{proof}

\begin{prop}[Excision-like property]\label{gamma excion-like}

Let $G=(V,E)$ be hypergraph. Let $G_1=(V_1,E(V_1))$ and $G_2=(V_2,E(V_2))$ be subhypergraph with $\cl^i(G_1)\subseteq \int^i(G_2)$. Then the inclusion $(G-G_1,G_2-G_1)\rightarrow (G,G_2)$ induces isomorpohisms $H_{n}^\Gamma(G-G_1,G_2-G_1)\cong H_{n}^\Gamma(G,G_2)$ for all $n< 4i$. Equivalently, for interior cover $\{A,B\}$ of hypergraph $G=(V,E)$ the inclusion $(B,A\cap B)\rightarrow (G,A)$ induces isomorphisms $H_{n}^\Gamma(B,A\cap B)\cong H_{n}^\Gamma(G,A), \forall n<4i$.
\end{prop}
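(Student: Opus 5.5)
The plan is to follow the proof of Theorem \ref{gamma excision} verbatim. The only new ingredient is a strengthened form of Lemma \ref{lem:Gamma-localization}: if $\mathbb{U}=\{A,B\}$ satisfies $G=\int^i(A)\cup\int^i(B)$, then $C_n^\Gamma(G)=C_n^\Gamma(G,\mathbb{U})$ for all $n<4i$. Here $C_n^\Gamma(G,\mathbb{U})$ is spanned by cubes whose image lies in $A$ or in $B$. Once this holds, the purely algebraic chain of isomorphisms
\[
\frac{C_n^\Gamma(B)}{C_n^\Gamma(A\cap B)}\cong \frac{C_n^\Gamma(A)+C_n^\Gamma(B)}{C_n^\Gamma(A)}= \frac{C_n^\Gamma(G)}{C_n^\Gamma(A)}
\]
from Theorem \ref{gamma excision} goes through in each degree $n<4i$. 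It is compatible with the relative boundary maps, since all maps are induced by inclusions. This gives an isomorphism of the relative complexes in degrees below $4i$, and hence of $H_n^\Gamma$ for $n<4i$ (at the top degree $n=4i-1$ one should check that the map on boundaries from degree $4i$ still suffices; I expect to state the range so that this is covered, possibly with a slight shift). The equivalence of the two formulations is as in Theorem \ref{gamma excision}: put $B=G-G_1$ and $A=G_2$, and use Lemma \ref{iterative int and cl}(2),(3) to translate $\cl^i(G_1)\subseteq\int^i(G_2)$ into $G=\int^i(A)\cup\int^i(B)$.

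For the localization step, I take a singular cube $\sigma:J_n^\Gamma\to G$ and use the layers $S_j$. The key fact, already used in Lemma \ref{lem:Gamma-localization}, is that adjacent layers are joined by hyperedges of $J_n^\Gamma$. Thus $\sigma(S_{j\pm1})$ lies in the closure of $\sigma(S_j)$ in $G$, and $\sigma(S_{j\pm 2})$ lies in the second closure, because $e_a$ has diameter $2$. I then argue by walking along layers. Suppose without loss of generality that $\sigma(S_0)\subseteq\int^i(A)$. Each application of $\cl$ undoes one level of $\int$, so the first $2i$ or so layers are forced into $A$. If some vertex of a later layer (up to $S_{4i-1}$) leaves $A$, it lies in $G-A$. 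Its $i$-fold closure then covers a band of $2i$ neighboring layers, and this band lies in $\cl^i(G-A)\subseteq\int^i(B)$. Pulling back from that band through $i$ more closure steps covers the remaining layers in $B$, exactly as in the $n\in\{3,4\}$ case of Lemma \ref{lem:Gamma-localization}. The result is that all of $\operatorname{Im}\sigma$ lies in $A$ or in $B$ whenever $n<4i$.

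The main obstacle is making this layer-counting rigorous. One must show that two cases genuinely suffice: either everything stays in $A$, or the witness vertex outside $A$ drags everything into $B$. This needs the case $\sigma(S_0)\in\int^i(A)$ with a witness $x\in S_j$, $j\ge 2i$, to leave no layer uncovered. I would handle it with an induction on $i$, using Lemma \ref{iterative int and cl}(1) to peel off one level of interior at a time. At each step I would replace the cover $\{A,B\}$ by $\{\int^{i-1}(A),\int^{i-1}(B)\}$ (or the reverse) and apply Lemma \ref{lem:Gamma-localization} to blocks of consecutive layers, which are themselves $\Gamma$-subcubes of dimension at most $4$. The counterexample with $J_5^\Gamma$ shows that the bound is sharp for $i=1$, which guides how many layers each level of interior can absorb.
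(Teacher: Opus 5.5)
\textbf{Verdict: there is a genuine gap in the localization step.} Your skeleton matches the paper's: a localization statement $C_n^\Gamma(G)=C_n^\Gamma(G,\mathbb{U})$, then the algebraic isomorphism from Theorem~\ref{gamma excision}, then the translation $B=G-G_1$, $A=G_2$. But the localization step is the only real content, and your version of it does not work.

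Your claim that the $i$-fold closure of a witness $y\in S_j$ ``covers a band of $2i$ neighboring layers'' is false. By Lemma~\ref{closure and T_m}, what you actually control is the Hamming ball $T_{2i}(y)$, and for $0<j<n$ this is not a union of layers. Already for $i=1$, $n=4$, the vertex $y=(1,1,1,0)\in S_3$ is at Hamming distance $3$ from $(0,0,1,1)\in S_2$. So $\sigma(y)\notin A$ says nothing about $\sigma(0,0,1,1)$ after one closure step. The layer-wise wording of the $n\in\{3,4\}$ case of Lemma~\ref{lem:Gamma-localization}, which you are imitating, has the same defect.

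Your proposed repair does not close the gap either. A block of consecutive layers is not a $\Gamma$-subcube: for $n>4$, $S_0\cup\dots\cup S_4$ contains all $n$ unit vectors, while a $4$-dimensional face contains at most $4$ vertices of $S_1$. Moreover, knowing that each face of dimension $\le 4$ lands in $\int^{i-1}(A)$ or in $\int^{i-1}(B)$ gives no mechanism for deciding where all of $\operatorname{Im}\sigma$ lands.

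There is also a degree-range problem. Localization only for $n<4i$ makes the relative complexes isomorphic only in degrees $\le 4i-1$, which yields $H_n^\Gamma$ only for $n\le 4i-2$. Injectivity on $H_{4i-1}^\Gamma$ needs $C_{4i}^\Gamma(G)=C_{4i}^\Gamma(G,\mathbb{U})$, so your ``slight shift'' would genuinely weaken the statement.

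\textbf{The missing idea} is to drop the layers and the base point $S_0$ entirely and argue with balls centred at the witness. This is what Lemmas~\ref{closure and T_m}, \ref{cl of vertex}, \ref{gamma interior} and \ref{gamma excision-like proof} are set up for.
\begin{itemize}
\item Since $\sigma$ maps hyperedges into hyperedges and $\cl^k(y)=T_{2k}(y)$ in $J_n^\Gamma$, one has $\sigma(T_{2k}(y))\subseteq\cl^k(\sigma(y))$.
\item If $\operatorname{Im}\sigma\not\subseteq A$, pick $y$ with $\sigma(y)\notin A$. Then
\[
\sigma(T_{2i}(y))\subseteq\cl^i(G-A)=G-\int^i(A)\subseteq\int^i(B),
\]
using the $i$-level form of Proposition~\ref{prop:closure-interior}(2) and the cover hypothesis.
\item For each $z\in T_{2i}(y)$, applying Lemma~\ref{gamma interior} $i$ times gives $\sigma(T_{2i}(z))\subseteq B$.
\item Every $w$ with $|w-y|\le 4i$ lies in some such $T_{2i}(z)$: take $z$ on a geodesic from $y$ to $w$. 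Hence $\sigma(T_{4i}(y))\subseteq B$.
\item Since $T_{4i}(y)=V_n^\Gamma$ whenever $n\le 4i$, every cube of dimension $\le 4i$ lies in $A$ or in $B$.
\end{itemize}
This range includes the top chain degree $4i$, so the algebraic argument of Theorem~\ref{gamma excision} gives the isomorphism for all $n<4i$ with no shift.
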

To prove the excision-like property for the $\Gamma$-homology, we first establish the necessary lemmas and definitions.
\begin{lem}\label{x-x'=2}
    Let $x,x'\in V_n^\Gamma$ with $|x-x'|=2$ then:
        $$x' \in V_{\operatorname{cl}(x)}.$$
\end{lem}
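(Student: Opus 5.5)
The plan is to verify the claim directly from the definitions of the $\Gamma$-hypercube and of closure, by exhibiting one hyperedge of $J_n^\Gamma$ that contains both $x$ and $x'$. Recall that
\[
V(\cl(x))=\{v\in V_n \mid \exists\, e\in E_n^\Gamma \text{ with } v\in e \text{ and } x\in e\},
\]
and that the hyperedges of $J_n^\Gamma$ are $e_a=\{v\in V_n \mid |v-a|\le 1\}$ for $a\in V_n$. It therefore suffices to find some $a\in V_n$ with $|x-a|\le 1$ and $|x'-a|\le 1$.

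First, I will use the hypothesis $|x-x'|=2$. Since all coordinates lie in $\{0,1\}$, this says that $x$ and $x'$ differ in exactly two coordinates, say $i\neq j$, and agree in all the others.

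Next, I will take $a$ to be the vertex obtained from $x$ by flipping only the $i$-th coordinate. Equivalently, $a_k=x_k$ for $k\ne i$ and $a_i=1-x_i=x'_i$. Then $a$ differs from $x$ only in coordinate $i$, so $|a-x|=1$. Also, $a$ differs from $x'$ only in coordinate $j$, because $a_i=x'_i$, $a_j=x_j\neq x'_j$, and $a_k=x_k=x'_k$ for all other $k$. Hence $|a-x'|=1$.

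Consequently $x,x'\in e_a$, and $e_a\cap\{x\}\ne\emptyset$, so $x'\in V(\cl(x))$ by the definition of closure. No step here presents a real obstacle. The only point needing care is reading $\cl$ as computed inside $J_n^\Gamma$, where closure means lying in a common hyperedge, as in Definition~\ref{def:neighborhood}. Under that reading, the argument is exactly this explicit choice of the "midpoint" vertex $a$.
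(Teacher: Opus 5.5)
Your proof is correct and is essentially the paper's argument: the paper writes $x'=R_iR_j(x)$, with $R_i$ the flip of the $i$-th coordinate, and takes $y=R_i(x)$ as the center of a hyperedge $e_y$ containing both $x$ and $x'$, which is exactly your midpoint $a$. Your version is slightly more explicit, since you check $|a-x'|=1$ coordinate by coordinate.
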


\begin{proof}
Let $x=(x_1,\ldots,x_n)$ and $x'=(x'_1,\ldots,x'_n)$. Define, 
\[R_i:V_n^\Gamma\rightarrow V_n^\Gamma ~~\text{by}~~ (x_1\ldots,x_n)\mapsto (x_1,\ldots,x_{i-1},1-x_i,x_{i+1},\ldots,x_n) .\]
Without loss of generality we can assume that $x'=R_i R_j(x),~ \text{where}~i,j\in \{1,\ldots,n\}$ and $i\ne j$. Let $y=R_i(x)$. It follows $|x-y|=1$ and $|y-x'|=1$. Furthermore, $x,x'\in e_y$ by definition of $J_n^\Gamma.$ Since $x'\in e_y$ and $e_y\cap \{x\}\neq \phi$, $x'\in V_{\cl(x)}.$
\end{proof}


\begin{Def}
Let $J_n^\Gamma$ be a $\Gamma$-hypercube and let $x \in V_n^\Gamma$.
For $m\ge 0$, define
\[
T_m(x)=\{t\in V_n^\Gamma : |t-x|\le m\}.
\]
\end{Def}
\begin{lem}\label{closure and T_m}
Let $J_{2n}^\Gamma=(V_{2n}^\Gamma,E_{2n}^\Gamma)$ denote the $\Gamma$-hypercube
and let $x\in V_n^\Gamma$. For $k\ge0$, the hypergraph $\cl^k(x)$ is the induced subhypergraph
of $J_{2n}^\Gamma$ whose vertex set is $T_{2k}(x)$. Therefore, for $x\in V_n^\Gamma$, $\cl^n(x)=J_{2n}^\Gamma.$
\end{lem}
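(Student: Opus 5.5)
The plan is to work inside the ambient cube $J_{2n}^\Gamma$, reading $x$ as a vertex of $V_{2n}^\Gamma$ (this is what the statement needs to make sense), and to prove by induction on $k$ that $V(\cl^k(x))=T_{2k}(x)$. Here $T_m(x)$ is taken inside $V_{2n}^\Gamma$. Since $\cl^k$ and $T_{2k}(x)$ are both induced subhypergraphs determined by their vertex sets, equality of vertex sets is all that must be shown.

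The base case $k=0$ is immediate: $V(\cl^0(x))=\{x\}=T_0(x)$. For the inductive step, the recursive definition gives $\cl^{k}(x)=\cl^{k-1}(\cl(x))$. Unwinding, it suffices to prove the single-step claim
\[
V(\cl(T_m(x)))=T_{m+2}(x)\qquad (m\ge 0),
\]
applied with $m=0,2,\dots,2k-2$. Strictly, one should check that iterating $\cl$ from the inside agrees with applying it from the outside. Both amount to applying the operator $\cl$ $k$ times to $\{x\}$, so this bookkeeping is routine.

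For the inclusion $V(\cl(T_m(x)))\subseteq T_{m+2}(x)$, take $v$ in some hyperedge $e_a$ that meets $T_m(x)$ at a vertex $t$. Since $e_a$ is the radius-$1$ ball about $a$ in the Hamming metric, $|v-a|\le 1$ and $|a-t|\le1$. The triangle inequality then gives $|v-x|\le |v-a|+|a-t|+|t-x|\le m+2$.

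For the reverse inclusion, let $|v-x|\le m+2$. If $|v-x|\le m$, then $v\in e_v\cap T_m(x)$ and we are done. Otherwise, flip one or two of the coordinates where $v$ and $x$ differ to produce $t$ with $|t-x|\le m$ and $|v-t|\in\{1,2\}$. If $|v-t|=1$, then $v\in e_t$, which meets $T_m(x)$. If $|v-t|=2$, Lemma~\ref{x-x'=2} gives $v\in V(\cl(t))\subseteq V(\cl(T_m(x)))$.

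This proves $V(\cl^k(x))=T_{2k}(x)$. The final assertion follows because any two vertices of $V_{2n}^\Gamma$ are at Hamming distance at most $2n$, so $T_{2n}(x)=V_{2n}^\Gamma$ and hence $\cl^n(x)=J_{2n}^\Gamma$.

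The only real obstacle is the reverse inclusion, namely constructing $t$ by moving from $v$ toward $x$ and invoking Lemma~\ref{x-x'=2}. The rest is the triangle inequality and unwinding the recursive definition of $\cl^k$.
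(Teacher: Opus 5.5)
Your proof is correct and follows the same route as the paper: induction on $k$, the triangle inequality for $V(\cl^{k}(x))\subseteq T_{2k}(x)$, and Lemma~\ref{x-x'=2} for the reverse inclusion. Your version is slightly more complete, since you also treat the odd case $|v-x|=m+1$ (via $v\in e_t$), which the paper's proof skips when it passes directly from $|z-x|>2k$ to $|z-x|=2k+2$.
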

    \begin{proof}
We argue by induction on $k$.
For $k=0$, Lemma~\ref{x-x'=2} implies that the vertex set of
$\cl^0(x)$ consists only of $x$. Hence
\[
V_{\cl^0(x)}=\{x\}=T_0(x).
\]
\\
Assume now that $V_{\cl^k(x)}=T_{2k}(x)$.
Since $\cl^{k+1}(x)=\cl(\cl^k(x))$, every vertex of $\cl^{k+1}(x)$
is either in $\cl^k(x)$ or is at distance $2$ from some vertex
$y\in \cl^k(x)$.
Let $z$ be such a vertex. Then $|y-z|=2$, and since
$y\in T_{2k}(x)$ we have $|y-x|\le 2k$.
By the triangle inequality,
\[
|z-x|\le |z-y|+|y-x|\le 2+2k=2k+2,
\]
so $z\in T_{2k+2}(x)$.
\\
Conversely, if $z\in T_{2k+2}(x)$, then $|z-x|\le 2k+2$.
If $|z-x|\le 2k$ then $z\in\cl^k(x)$ by the induction hypothesis.
Otherwise $|z-x|=2k+2$, and there exists a vertex $y$
with $|y-x|\le 2k$ and $|z-y|=2$.
Thus $y\in\cl^k(x)$ and Lemma~\ref{x-x'=2} implies that
$z\in\cl^{k+1}(x)$.
\\
Therefore, $V_{\cl^{k+1}(x)}=T_{2k+2}(x)$.
\end{proof}

\begin{lem}\label{cl of vertex}
Let $f: G\to G'$ be a hypergraph morphism and let $v$ be a vertex of $G$.
Then
\[
f(\cl(v))\subseteq \cl(f(v)).
\]
\end{lem}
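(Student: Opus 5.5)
The plan is to unwind the definitions of closure and of hypergraph morphism and compare vertex sets directly. Here $\cl(v)$ is the induced subhypergraph of $G$ on the set of all $w\in V(G)$ for which some hyperedge $e\in E(G)$ contains both $v$ and $w$. Similarly, $\cl(f(v))$ is the induced subhypergraph of $G'$ on the set of all $w'\in V(G')$ for which some hyperedge $e'\in E(G')$ contains both $f(v)$ and $w'$. The statement $f(\cl(v))\subseteq \cl(f(v))$ is therefore first a statement about vertex sets: $f(V(\cl(v)))\subseteq V(\cl(f(v)))$. If the inclusion is meant as an inclusion of subhypergraphs, it then follows from the fact that $\cl(f(v))$ is an induced subhypergraph.

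For the vertex inclusion, take $w\in V(\cl(v))$ and choose a hyperedge $e\in E(G)$ with $\{v,w\}\subseteq e$. Since $f$ is a hypergraph morphism, there is a hyperedge $e'\in E(G')$ with $f(e)\subseteq e'$. Then $\{f(v),f(w)\}\subseteq f(e)\subseteq e'$. Hence $e'$ is a hyperedge meeting $\{f(v)\}$ and containing $f(w)$, so $f(w)\in V(\cl(f(v)))$.

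For the hyperedge part, recall that the image hypergraph $f(\cl(v))$ has hyperedges $f(e\cap V(\cl(v)))$ for $e\in E(G)$. Each such set is contained in $f(e)\subseteq e'$ for some $e'\in E(G')$. It is also contained in $V(\cl(f(v)))$ by the vertex step. So it lies in $e'\cap V(\cl(f(v)))$, which is a hyperedge of the induced subhypergraph $\cl(f(v))$. This gives the inclusion in the subhypergraph sense defined in Section~\ref{Sec_prelim}.

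I do not expect a genuine obstacle. The only delicate point is deciding which meaning of ``$\subseteq$'' is intended, vertex sets or subhypergraphs, and checking that the hyperedge condition is handled by the induced structure as above. The lemma is presumably meant to be iterated, giving $f(\cl^k(v))\subseteq \cl^k(f(v))$ by induction on $k$. That iteration would use the same argument applied to sets of vertices, which works verbatim with $\{v\}$ replaced by any subhypergraph.
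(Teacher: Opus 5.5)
Your proof is correct and is essentially the paper's argument: choose a hyperedge $e$ containing $v$ and $w$, use the morphism property to get $e'\supseteq f(e)\ni f(v),f(w)$, and conclude $f(w)\in\cl(f(v))$. Your extra check of the hyperedge condition, which gives the inclusion in the subhypergraph sense rather than only on vertex sets, is a valid refinement that the paper does not spell out.
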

\begin{proof}
Let $x\in f(\cl(v))$. Then there exists $y\in \cl(v)$ such that
$f(y)=x$.\\
By the definition of closure, there exists a hyperedge $e$ of $G$
such that $v,y\in e$.
Since $f$ is a hypergraph morphism, there exists a hyperedge
$e'$ of $G'$ with
\[
f(e)\subseteq e'.
\]
In particular, $f(v),f(y)\in e'$.
Thus $x=f(y)$ lies in a hyperedge of $G'$ containing $f(v)$,
and therefore $x\in \cl(f(v))$.
\end{proof}

\begin{lem}\label{gamma interior}
Let $G=(V,E)$ be a hypergraph and let $A$ be a subhypergraph. Let $x\in V_n^\Gamma$ and let
$\sigma:J_n^\Gamma\to G$ be a hypergraph morphism. If
$\sigma(x)\in V(\int^{i}(A))$, then
\[
\sigma(N(x,J_n^\Gamma))\subseteq V(\int^{i-1}(A)).
\]
\end{lem}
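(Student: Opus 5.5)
The plan is to reduce the statement to the case $i=1$ and then use the identity $\int^{i}(A)=\int(\int^{i-1}(A))$, which holds by the recursive definition of the $k$-level interior together with Lemma \ref{iterative int and cl}. Write $A'=\int^{i-1}(A)$. This is an induced subhypergraph of $G$. The hypothesis becomes $\sigma(x)\in V(\int(A'))$, and the goal becomes $\sigma(N(x,J_n^\Gamma))\subseteq V(A')$. So it suffices to prove the one-step claim: if $\sigma(x)\in V(\int(A'))$, then every neighbour of $x$ in $J_n^\Gamma$ is sent into $V(A')$.

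For the one-step claim, take $y\in V(N(x,J_n^\Gamma))$. By Definition \ref{def:neighborhood}, $V(N(x,J_n^\Gamma))=V(\cl(x))$, so $y$ lies in $\cl(x)$. Lemma \ref{cl of vertex} then gives $\sigma(y)\in V(\cl(\sigma(x)))$. Concretely, there is a hyperedge $e'$ of $G$ containing both $\sigma(x)$ and $\sigma(y)$. We now argue by contradiction and suppose $\sigma(y)\notin V(A')$. Then $\sigma(y)\in V(G-A')$, and $e'$ is a hyperedge meeting $V(G-A')$ that contains $\sigma(x)$. By the definition of closure, this means $\sigma(x)\in V(\cl(G-A'))$. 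On the other hand, $V(\int(A'))=V\setminus V(\cl(G-A'))$ by definition (see also Proposition \ref{prop:closure-interior}(2)). So $\sigma(x)\notin V(\int(A'))$, which contradicts the hypothesis. Hence $\sigma(y)\in V(A')$.

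The main obstacle is bookkeeping rather than mathematics. The recursive definition of $\int^i$ is stated as $\int^{i-1}(\int(G'))$, that is, the interior is taken first and then iterated. To use the one-step claim we need the opposite order, $\int^i(A)=\int(\int^{i-1}(A))$. I would verify this by a short induction on $i$: both expressions are the $i$-fold composite of the operator $\int$ on vertex sets, and composites of a single operator commute with themselves. The degenerate case $i=0$ also needs care, since $\int^{-1}$ is undefined. I would assume $i\ge1$ there, as is implicit in the statement.
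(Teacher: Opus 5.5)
Your proposal is correct and is essentially the paper's argument: both take a neighbour $y$ of $x$, note that $\sigma(x)$ and $\sigma(y)$ lie in a common hyperedge of $G$, and reach a contradiction by placing $\sigma(x)$ in the closure of the complement. The only difference is bookkeeping. The paper rewrites $V(\int^{i-1}(A))$ as $V\setminus V(\cl^{i-1}(G-A))$ and steps from $\cl^{i-1}$ to $\cl^{i}$, whereas you set $A'=\int^{i-1}(A)$ and use only the one-step definition of interior; each version relies on the same reordering of the recursive definition ($\cl^{i}=\cl\circ\cl^{i-1}$ for the paper, $\int^{i}=\int\circ\int^{i-1}$ for you), which you correctly flag and justify.
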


\begin{proof}
Let $y\in \sigma(N(x,J_n^\Gamma))$. Then there exists
$z\in N(x,J_n^\Gamma)$ such that $\sigma(z)=y$.
Since $\sigma$ is a hypergraph morphism, it follows that
$y\in N(\sigma(x),G)$.\\
Suppose, for contradiction, that $y\notin V(\int^{i-1}(A))$.
By the definition of interior,
\[
V(\int^{i-1}(A))=V(G)- V(\cl^{i-1}(G-A)),
\]
and hence $y\in V(\cl^{i-1}(G-A))$.
Since $y\in N(\sigma(x),G)$, this implies
$\sigma(x)\in V(\cl^{i}(G-A))$.
Therefore
\[
\sigma(x)\notin V(\int^{i}(A))
= V(G)- V(\cl^{i}(G-A)),
\]
which contradicts the assumption that
$\sigma(x)\in V(\int^{i}(A))$.\\
Thus $y\in V(\int^{i-1}(A))$, and therefore
\[
\sigma(N(x,J_n^\Gamma))\subseteq V(\int^{i-1}(A)).
\]
\end{proof}

\begin{lem}\label{gamma excision-like proof}
	Let $G=(V, E)$ be the hypergraph, $G_1$ and $G_2$ be any two subhypergraphs such that there is some positive integer $i$ with $G=\int^i(G_1)\cup \int^i(G_2)$. Then for $n\leq 4i$ and for every $(n,\Gamma)$-hypercube $\sigma: J_n^\Gamma \rightarrow G$, $Im(\sigma)\subset G_1$ or $Im(\sigma) \subset G_2.$
\end{lem}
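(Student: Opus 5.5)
We generalize the argument of Lemma~\ref{lem:Gamma-localization}, using the iterated interiors $\int^i$ and the metric description of closures in $J_n^\Gamma$ from Lemmas~\ref{x-x'=2} and \ref{closure and T_m}. Fix an $(n,\Gamma)$-hypercube $\sigma:J_n^\Gamma\to G$ with $n\le 4i$. Let $x_0$ be the unique vertex of $S_0$ and $x_1$ the unique vertex of $S_n$, so $|x_0-x_1|=n\le 4i$. Every vertex $t\in V_n^\Gamma$ satisfies
\[
|t-x_0|+|t-x_1|=n\le 4i,
\]
so $t$ is within distance $2i$ of $x_0$ or of $x_1$. By Lemma~\ref{closure and T_m}, $V_n^\Gamma=T_{2i}(x_0)\cup T_{2i}(x_1)$, and $T_{2i}(x)=V(\cl^i(x))$.

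The key transport step iterates Lemma~\ref{gamma interior}. Vertices at distance $\le 2$ from $x$ lie in $\cl(x)=N(x,J_n^\Gamma)$, by Lemma~\ref{x-x'=2} together with the definition of the $\Gamma$-hyperedges. Hence if $\sigma(x)\in V(\int^k(A))$, then $\sigma(T_2(x))\subseteq V(\int^{k-1}(A))$. Inducting on $k$, and using that every point of $T_{2k}(x)$ is reached from $x$ by a chain of $k$ steps of distance at most $2$, we get
\[
\sigma(T_{2k}(x))\subseteq V(\int^{0}(A))=V(A).
\]
This is the analogue of Lemma~\ref{cl of vertex} for iterated closures.

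Now do a case split, as in Lemma~\ref{lem:Gamma-localization}. Since $G=\int^i(G_1)\cup\int^i(G_2)$, we may assume without loss of generality that $\sigma(x_0)\in V(\int^i(G_1))$, so $\sigma(T_{2i}(x_0))\subseteq V(G_1)$.

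If $\sigma(x_1)\in V(\int^i(G_1))$ as well, the same argument gives $\sigma(T_{2i}(x_1))\subseteq V(G_1)$, and hence $\operatorname{Im}\sigma\subseteq G_1$.

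Otherwise $\sigma(x_1)\in V(\int^i(G_2))\setminus V(\int^i(G_1))$, so $\sigma(T_{2i}(x_1))\subseteq V(G_2)$. It remains to show that $\sigma(T_{2i}(x_0))\subseteq V(G_2)$ too. We aim to use Lemma~\ref{iterative int and cl}(3) in the form $\cl^i(G-G_1)\subseteq\int^i(G_2)$. Note that $\sigma(x_1)\in V(\cl^i(G-G_1))$, because $G-\int^i(G_1)=\cl^i(G-G_1)$ by Lemma~\ref{iterative int and cl}(2), read with $\cl^i$. The plan is then to push "nearness to $G-G_1$" from $x_1$ back toward $x_0$, placing the middle layers inside $\int^i(G_2)$, and finally to apply the transport step from there.

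This last step is the main obstacle. Distances only add to $n\le 4i$, so the budget must be split carefully. I would try first to choose a vertex $y$ in a middle layer $S_j$ with $|y-x_0|,|y-x_1|\le 2i$, and to show $\sigma(y)\in V(\int^i(G_2))$ by a contrapositive argument as in Lemma~\ref{gamma interior}. If this proves too lossy, I would instead track the minimal $k$ with $\sigma(x_1)\in V(\int^k(G_2))\setminus V(\int^{k}(G_1))$ and balance the two radii accordingly, or else accept a weaker bound. The exact inequality $n\le 4i$ appears sharp in view of the $(5,\Gamma)$ counterexample for $i=1$.
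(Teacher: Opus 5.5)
Your transport step and your first case are correct. The second case is where the argument breaks, and not because the radii are tight. The sub-goal you set there, $\sigma(T_{2i}(x_0))\subseteq V(G_2)$, is false in general. More fundamentally, knowing whether $\sigma(x_0)$ and $\sigma(x_1)$ lie in $\int^i(G_1)$ or $\int^i(G_2)$ does not tell you which of $G_1,G_2$ contains the image. The hypothesis $\sigma(x_1)\notin V(\int^i(G_1))$ only says that some vertex of $G-G_1$ is within distance $i$ of $\sigma(x_1)$, and that vertex need not lie in $\operatorname{Im}\sigma$.

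Here is a counterexample. Take $i=1$ and let $G$ have vertices $w_0,\dots,w_4,z$ and hyperedges $\{w_0,w_1,w_2\}$, $\{w_1,w_2,w_3\}$, $\{w_2,w_3,w_4\}$, $\{w_4,z\}$. Let $G_1,G_2$ be the induced subhypergraphs on $\{w_0,\dots,w_4\}$ and $\{w_2,w_3,w_4,z\}$.
\begin{itemize}
\item Then $\int(G_1)=\{w_0,\dots,w_3\}$ and $\int(G_2)=\{w_4,z\}$, so $G=\int(G_1)\cup\int(G_2)$.
\item The map $\sigma\colon J_4^\Gamma\to G$ with $\sigma(S_j)=w_j$ is a morphism, since each $e_a$ with $a\in S_j$ lands in $\{w_{j-1},w_j,w_{j+1}\}$.
\item We have $\sigma(x_0)=w_0\in\int(G_1)$ and $\sigma(x_1)=w_4\in\int(G_2)\setminus\int(G_1)$, so you are in your second case.
\item Yet $\sigma(T_2(x_0))=\{w_0,w_1,w_2\}\not\subseteq V(G_2)$; the image lies in $G_1$ only.
\end{itemize}
Your proposed repair fails on the same example, because the middle layer goes to $w_2\notin\int(G_2)$. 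Moreover, precomposing with $v\mapsto(1,\dots,1)-v$ and swapping the labels of $G_1,G_2$ gives the same endpoint pattern with image in $G_2$ only. So no rebalancing of radii around $x_0$ and $x_1$ can close this case.

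The missing idea is to split on whether $\sigma$ actually hits $G-G_1$. This is the split the paper uses; its write-up of the propagation step is loose, but this is the intended structure.
\begin{itemize}
\item If $\operatorname{Im}\sigma\subseteq G_1$, you are done.
\item Otherwise, pick a cube vertex $y$ with $\sigma(y)\in V(G-G_1)$, and make $y$ the base point rather than $x_0$ or $x_1$.
\item Iterating Lemma~\ref{cl of vertex}, and using $T_{2i}(y)=V(\cl^i(y))$, gives $\sigma(T_{2i}(y))\subseteq V(\cl^i(G-G_1))=V(G)\setminus V(\int^i(G_1))\subseteq V(\int^i(G_2))$. This uses Lemma~\ref{iterative int and cl}(2) with $G'=G-G_1$, together with the cover hypothesis.
\item Your transport step at each $t\in T_{2i}(y)$ then gives $\sigma(T_{2i}(t))\subseteq V(G_2)$.
\item Every vertex $s$ satisfies $|s-y|\le n\le 4i$. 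Flipping at most $2i$ of the coordinates where $y$ and $s$ differ produces such a $t$ with $|t-s|\le 2i$. Hence $\operatorname{Im}\sigma\subseteq G_2$.
\end{itemize}
Equivalently, argue by contradiction: if $\sigma(p)\notin V(G_1)$ and $\sigma(q)\notin V(G_2)$, a midpoint $t$ with $|t-p|,|t-q|\le 2i$ has $\sigma(t)$ in neither $\int^i(G_1)$ nor $\int^i(G_2)$. With this split, your decomposition $V_n^\Gamma=T_{2i}(x_0)\cup T_{2i}(x_1)$ and your first case become unnecessary.
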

\begin{proof} Let $\sigma$ be a $(n,\Gamma)$-hypercube and $x\in V^\Gamma.$ Note that by lemma $\ref{closure and T_m}$, $T_{4i}=V_n^\Gamma$. Therefore, we show that either $\sigma(T_{4i})\subseteq G_1$ or $\sigma(T_{4i})\subseteq G_2$. Without loss of generality let  $\sigma(T_0(x))\subset \int^i(G_1)$. Then from \ref{closure and T_m} and \ref{gamma interior},  
\[\sigma(T_2(x))\subseteq \int^{i-1}(G_1)\] 
and iteratively we have $\sigma(T_{2i}(x))\subseteq \int^{i-i}(G_1)=G_1$.\\ 
Now if $\sigma(T_{4i}-T_{2i})\subseteq G_1$, then we have the desired result. If $\sigma(T_{4i}-T_{2i})\not\subseteq G_1$ then \[\sigma(T_{2i})\subseteq G-G_1, \sigma(T_{2i+2})\subseteq \cl(G-G_1).\] Iteration over $i$ gives $\sigma(T_{2i+2i})\subseteq \cl^i(G-G_1).$ Now by lemma \ref{iterative int and cl}, 
\[\cl^i(G-G_1)\subseteq\int^i(G_2).\] Thus, $\sigma(T_{4i})\subseteq \int^i(G_2)$ and the result follows.
\end{proof}

\begin{proof}[\ref{gamma excion-like},excision-like]

Proof follows in the same pattern as the proof of \ref{gamma excision} by using [\ref{gamma excision-like proof}]

\end{proof}

\begin{lem}\label{box excision lemma}
Let $G=(V,E)$ be the hypergraph, $G_1$ and $G_2$ be any two subhypergraph with  $G=\int(G_1)\cup \int(G_2)$. Then for $n\leq 1$ and for every $(n,\Box)$-hyppercube $\sigma$, $Im(\sigma)\subset G_1$ or $Im(\sigma) \subset G_2.$ In particular, $C_n^\Box(G)=C_n^\Box(G,\mathbb{U})$, where $\mathbb{U}=\{G_1,G_2\}$ and $n\in\{0,1\}$.
\end{lem}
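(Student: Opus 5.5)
The plan is to imitate the proofs of Lemma~\ref{times hom lem} and Lemma~\ref{lem:Gamma-localization}. I will treat $n=0$ and $n=1$ separately, and then deduce the equality of chain groups.

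\emph{Case $n=0$.} A $(0,\Box)$-hypercube $\sigma$ has image a single vertex $v$. Since $G=\int(G_1)\cup\int(G_2)$, the vertex $v$ lies in $V(\int(G_1))$ or in $V(\int(G_2))$. By Proposition~\ref{prop:closure-interior}(1), $\int(G_j)\subseteq G_j$. Hence $\operatorname{Im}(\sigma)\subset G_1$ or $\operatorname{Im}(\sigma)\subset G_2$.

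\emph{Case $n=1$.} Here $J_1^\Box$ has vertices $0,1$ and the single hyperedge $\{0,1\}$. Because $\sigma$ is a hypergraph morphism, there is a hyperedge $e\in E$ with $\{\sigma(0),\sigma(1)\}\subseteq e$. Without loss of generality, $\sigma(0)\in V(\int(G_1))$. I claim that $\sigma(1)\in V(G_1)$. Suppose not. Then $\sigma(1)\in V(G-G_1)$, and the hyperedge $e$ contains $\sigma(1)$ and meets $V(G-G_1)$. By the definition of closure, this gives $\sigma(0)\in V(\cl(G-G_1))$. On the other hand, by definition $V(\int(G_1))=V\setminus V(\cl(G-G_1))$, so $\sigma(0)\notin V(\cl(G-G_1))$, a contradiction. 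Therefore $\operatorname{Im}(\sigma)=\{\sigma(0),\sigma(1)\}\subseteq V(G_1)$.

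This is the same neighbourhood argument as the step $\sigma(S_0\cup S_1)\subseteq V(G_1)$ in Lemma~\ref{lem:Gamma-localization}. The only care needed is to note that $\sigma(1)$ is a neighbour of $\sigma(0)$ in the sense of Definition~\ref{def:neighborhood}. One could also phrase the argument via Lemma~\ref{cl of vertex}.

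\emph{Chain-level statement.} Since $G_1$ and $G_2$ are treated as induced subhypergraphs, a morphism with image in $V(G_j)$ factors through $G_j$. The image of $e$ is $e\cap V(G_j)$, which is a hyperedge of the induced subhypergraph. Hence each generator of $C_n^\Box(G)$ lies in $C_n^\Box(G_1)+C_n^\Box(G_2)=C_n^\Box(G,\mathbb{U})$. Degenerate cubes cause no trouble, because the inclusions respect degeneracy. The reverse inclusion holds by definition, so $C_n^\Box(G)=C_n^\Box(G,\mathbb{U})$ for $n\in\{0,1\}$.

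The main obstacle is this subhypergraph point: one must make sure that $G_1$ and $G_2$ are induced, or replace them by induced ones, which is the convention used in the excision statements. The combinatorics itself is immediate, since $J_1^\Box$ has diameter $1$.
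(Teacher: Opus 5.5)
Your proof is correct and follows the paper's argument: the case $n=0$ comes directly from the cover. For $n=1$ you take $\sigma(0)\in V(\int(G_1))$ and use the hyperedge containing $\sigma(0),\sigma(1)$ to force $\sigma(1)\in V(G_1)$; your contradiction via $V(\int(G_1))=V\setminus V(\cl(G-G_1))$ spells out what the paper only asserts. Your remark that $G_1,G_2$ must be induced for the equality $C_n^\Box(G)=C_n^\Box(G_1)+C_n^\Box(G_2)$ is a genuine clarification the paper leaves implicit (the lemma says ``any two subhypergraph''), because for a non-induced $G_1$ a $1$-cube with image in $V(G_1)$ need not be a morphism into $G_1$.
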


\begin{proof} Note that by definition, $C_n^\Box(G,\mathbb{U}) \subset C_n^\Box(G)$. For converse we proceed through induction on $n$\\

For $n=0$, let $\sigma \in  C_0^\Box(G)$. Clearly, either $Im(\sigma) \in \int(G_1)$ or $Im(\sigma) \in \int(G_2)$. It implies $C_0^\Box(G)\subseteq C_0^\Box(G,\mathbb{U})$.\\

For $n=1$, in this case, the set $Im(\sigma) \subset X$ comprises two elements, namely, $\sigma(0)$ and $\sigma(1)$. We can assume without loss of generality that the element $\sigma(0) \in \int(G_1)$. Then by definition of hypergraph morphism $\sigma(0)\in e$, for some hyperedge $e$ in $G_1.$ Note that, $\sigma(1)\in e$, since $\{0,1\}$ is a hyperedge in $J_1^\Box$ we get $Im(\sigma) \subset G_1$.
\end{proof}

Next we show that Lemma \ref{box excision lemma} fails for the case $n\ge 2$.

\begin{ex}
Let $G$ be the hypergraph with vertex set $V=\{v_1,v_2,v_3,v_4\}$ and set of hyperedges $E=\{\{v_1,v_2\} , \{v_2,v_3\}, \{v_3,v_4\}\}$. Let vertex set of $G_1$ and $G_2$ be $\{v_1,v_2,v_3\} $ and $ \{v_2,v_3,v_4\} $, respectively. Then note that $\int(G_1)=\{v_1,v_2\}$ and $\int(G_2)=\{v_3,v_4\}$ Define $\sigma: J_2^\Box \rightarrow G$ by $\sigma(0,0)=v_1$, $\sigma(1,0)=v_2$,  $\sigma(0,1)=v_3$ and $\sigma(1,1)=v_4$. Note that $Im(\sigma)\notin G_i$ for $i=\{1,2\}.$
\end{ex}

\begin{lem}\label{lemma box excision-like}
Let $G=(V,E)$ be the hypergraph, $G_1$ and $G_2$ be any two subhypergraphs such that there is some positive integer $i$ with  $G=\int^i(G_1)\cup \int^i(G_2)$. Then for $n\leq i+1$ and for every $(n,\Box)$-hyppercube $\sigma$, $Im(\sigma)\subset G_1$ or $Im(\sigma) \subset G_2.$ In particular, $C_n^\Box(G)=C_n^\Box(G,\mathbb{U})$, where $\mathbb{U}=\{G_1,G_2\}$ and $n\leq i+1.$
\end{lem}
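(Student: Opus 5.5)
The plan is to imitate the proofs of Lemma~\ref{box excision lemma} and Lemma~\ref{gamma excision-like proof}: grow a ball outward from a base vertex of $J_n^\Box$ and track how far it can travel. The key difference from the $\Gamma$ case is that in $J_n^\Box$ (an $n$-fold Cartesian product of the single-edge hypergraph on $\{0,1\}$) the hyperedges are exactly the pairs $\{x,R_k(x)\}$. Here $R_k$ is the coordinate flip used in Lemma~\ref{x-x'=2}. Hence the neighborhood $N(x,J_n^\Box)$ is $T_1(x)$ rather than $T_2(x)$, and Lemma~\ref{cl of vertex} implies that each application of $\cl$ or $\int$ moves the image by one Hamming step only.

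First I fix $\sigma:J_n^\Box\to G$ and the base vertex $x=S_0=(0,\dots,0)$. Since $G=\int^i(G_1)\cup\int^i(G_2)$, I may assume without loss of generality that $\sigma(x)\in V(\int^i(G_1))$. Next I prove the $\Box$-analogue of Lemma~\ref{gamma interior}: if $\sigma(y)\in V(\int^{j}(A))$, then $\sigma(T_1(y))\subseteq V(\int^{j-1}(A))$. The argument is the same as before, because a hypergraph morphism sends the edge $\{y,R_k(y)\}$ into a hyperedge of $G$, so $\sigma(R_k(y))\in N(\sigma(y),G)$. Iterating $i$ times gives
\[
\sigma(S_0\cup S_1\cup\cdots\cup S_i)=\sigma(T_i(x))\subseteq V(\int^0(G_1))=V(G_1).
\]
If $n\le i$, this already shows $\operatorname{Im}(\sigma)\subseteq G_1$. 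The only remaining case is therefore $n=i+1$, in which the single extra layer $S_{i+1}=\{(1,\dots,1)\}$ must be handled.

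For that last layer I follow the second half of the proof of Lemma~\ref{lem:Gamma-localization}. Write $w=(1,\dots,1)$ and suppose $\sigma(w)\notin V(G_1)$, so $\sigma(w)\in V(G-G_1)$. Every vertex of $S_{i+1-j}$ lies within distance $j$ of $w$, so iterating Lemma~\ref{cl of vertex} along $T_1$-steps gives
\[
\sigma(T_j(w))\subseteq V(\cl^j(G-G_1)).
\]
By Lemma~\ref{iterative int and cl}(3), the hypothesis yields $\cl^i(G-G_1)\subseteq\int^i(G_2)$. Therefore $\sigma(T_i(w))\subseteq V(\int^i(G_2))$, and $T_i(w)$ contains every vertex except $x$. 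Applying the interior-shrinking lemma once to any $y\in S_1$ then gives $\sigma(x)\in V(\int^{i-1}(G_2))\subseteq V(G_2)$. Hence $\operatorname{Im}(\sigma)\subseteq G_2$. The statement $C_n^\Box(G)=C_n^\Box(G,\mathbb{U})$ follows immediately, as in Lemma~\ref{box excision lemma}, because every generator lies in $C_n^\Box(G_1)$ or in $C_n^\Box(G_2)$.

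The main obstacle is the bookkeeping in the closure step. I have to check that $\cl^j$ really only advances one Hamming layer in the $\Box$-cube, and that Lemma~\ref{iterative int and cl}(3) is applied with the roles of $G_1$ and $G_2$ arranged correctly. The order of the indices matters, since $\int^{i-1}(G_2)\subseteq G_2$ needs $\int^{j}\subseteq\int^{j-1}$, which is Lemma~\ref{iterative int and cl}(1) together with Proposition~\ref{prop:closure-interior}(1). The same one-layer-per-step count explains the bound $n\le i+1$ and matches the counterexample given for $n=2$, $i=1$.
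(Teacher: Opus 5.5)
Your proof is correct and is essentially the paper's argument. Interior-shrinking from $x=(0,\dots,0)$ gives $\sigma(T_i(x))\subseteq V(G_1)$, and if the top vertex $w$ leaves $G_1$, the inclusion $\cl^i(G-G_1)\subseteq\int^i(G_2)$ pushes the whole image into $G_2$. You spend $i$ closure steps and then one interior step, while the paper spends one closure step (absorbed via $\cl(G-G_1)\subseteq\cl^i(G-G_1)$) and then $i$ interior steps, leaving $\sigma(T_1(w))\subseteq\cl(G-G_1)$ implicit, so your indexing is the more explicit of the two.

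Your closing remark is wrong, though. For $i=1$ the lemma covers $n=2$, so it cannot match an $n=2$, $i=1$ counterexample; in fact the paper's map is not a hypergraph morphism, since it sends the edge $\{(0,0),(0,1)\}$ to $\{v_1,v_3\}$, which lies in no hyperedge. The bound $n\le i+1$ for $i=1$ is instead shown sharp at $n=3$ by the weight map $t\mapsto v_{1+|t|}$ from $J_3^\Box$ onto that same path.
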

\begin{proof}
Let $\sigma$ be a $(n,\Box)$-hypercube and fix $x=(0,0,\ldots,0) \in V_n^\Box$. For $n=0$,  $\sigma(T_0(x)) \subset   \int(G_1)$.  
Then, by the definition of hypergraph morphism,
\[
 {Im}(\sigma) \subset   \int(G_1).
\]
For $0 \le n \le i+1$, without loss of generality let
\[ \sigma(T_0(x)) \subset   \int^i(G_1). \]
Since elements of $T_{j-1}(x)$ and $T_{j+1}(x)$ are neighbors of $T_j(x)$ for $j=1,2,\ldots, n-1 $, we have
\[
\sigma(T_j(x)) \subset   \int^{i-j}(G_1), \text{ by definition of $k$-level interior}.\]
Now, for $n=i+1,$ $\sigma(T_n(x)-T_{n-1}(x))$ may or may not be in the vertex set of $G_1$; if it is, we have the desired result, and if it is not, then we have $\sigma(T_n(x)-T_{n-1}(x))\subset G-G_1$, and we claim that $Im(\sigma)\subset G_2$. Since we know that from \ref{iterative int and cl}, \[G-G_1\subset \cl(G-G_1)\subset \cl^i(G-G_1)\subseteq \int^i(G_2).\] It implies $\sigma(T_n(x)-T_{n-1}(x))\subset \int^i(G_2)$ and using the definition of $k$-level interior and for  $j=2,\ldots,n$ we have $\sigma(T_{n}(x)-T_{n-j-1}(x))\subseteq \int^{i-j+1}(G_2)$. Thus, $Im(\sigma)\subset G_2$.
\end{proof}

\begin{thm}
(Excision-like property)\\
Let $G=(V,E)$ be hypergraph. Let $G_1=(V_1,E(V_1))$ and $G_2=(V_2,E(V_2))$ be subhypergraph such that $G_1\subseteq G_2$ and $cl^i(G_1)\subseteq  \int^i(G_2)$. Then the inclusion $(G-G_1,G_2-G_1)\rightarrow (G,G_2)$ induces isomorpohisms $H_{n}^\Box(G-G_1,G_2-G_1)\cong H_{n}^\Box(G,G_2)$ for all $n< i+1$. Equivalently, for interior cover $\{A,B\}$ of hypergraph $G=(V,E)$ the inclusion $(B,A\cap B)\rightarrow (G,A)$ induces isomorphisms $H_{n}^\Box(B,A\cap B)\cong H_{n}^\Box(G,A), \forall n<i+1$.
\end{thm}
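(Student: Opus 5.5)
The plan is to follow the pattern of the proof of Theorem \ref{gamma excision}, using Lemma \ref{lemma box excision-like} in place of Lemma \ref{lem:Gamma-localization}. Work with the second formulation. Let $\{A,B\}$ be a cover of $G$ with $G=\int^i(A)\cup\int^i(B)$, and put $\mathbb{U}=\{A,B\}$. The first formulation follows from it by setting $B=G-G_1$ and $A=G_2$. Indeed, $\cl^i(G_1)\subseteq\int^i(G_2)$ together with Lemma \ref{iterative int and cl}(3) gives $G=\int^i(G_2)\cup\int^i(G-G_1)$, and then $A\cap B=G_2-G_1$.

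\textbf{Step 1 (chain level).} Lemma \ref{lemma box excision-like} gives $C_n^\Box(G)=C_n^\Box(G,\mathbb{U})=C_n^\Box(A)+C_n^\Box(B)$ for every $n\le i+1$. Since $A$, $B$ and $A\cap B$ are induced subhypergraphs, a cube lands in both $A$ and $B$ exactly when it lands in $A\cap B$. A cube is degenerate in a subhypergraph exactly when it is degenerate in $G$, because degeneracy depends only on the map of vertices. Together these give $C_n^\Box(A\cap B)=C_n^\Box(A)\cap C_n^\Box(B)$ for all $n$.

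The second isomorphism theorem then shows that inclusion induces isomorphisms
\[
C_n^\Box(B,A\cap B)=\frac{C_n^\Box(B)}{C_n^\Box(A)\cap C_n^\Box(B)}\;\cong\;\frac{C_n^\Box(A)+C_n^\Box(B)}{C_n^\Box(A)}=\frac{C_n^\Box(G)}{C_n^\Box(A)}=C_n^\Box(G,A)
\]
for all $n\le i+1$. These isomorphisms commute with the relative boundary maps, since they are induced by the inclusion of pairs.

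\textbf{Step 2 (homology).} The inclusion $\iota:(B,A\cap B)\to(G,A)$ is a chain map. By Step 1 it is an isomorphism in every degree $n\le i+1$. Fix $n<i+1$, so that both $n$ and $n+1$ lie in this range. Then $\iota$ restricts to an isomorphism $Z_n^\Box(B,A\cap B)\to Z_n^\Box(G,A)$, because $\iota_n$ is bijective and $\iota_{n-1}$ is injective (it is an isomorphism, or the zero-dimensional case holds trivially). It also carries $B_n^\Box(B,A\cap B)=\partial(C_{n+1}^\Box(B,A\cap B))$ onto $\partial(C_{n+1}^\Box(G,A))=B_n^\Box(G,A)$, because $\iota_{n+1}$ is surjective. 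Hence $H_n^\Box(B,A\cap B)\cong H_n^\Box(G,A)$ for all $n<i+1$. This is exactly where the bound $n<i+1$ enters: surjectivity in degree $n+1$ is needed.

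\textbf{Main obstacle.} The only nontrivial input is Lemma \ref{lemma box excision-like}, which supplies the equality $C_n^\Box(G)=C_n^\Box(A)+C_n^\Box(B)$ up to degree $i+1$. I take it as given. What needs care is the bookkeeping when passing modulo degenerate cubes. We must check that $C_n^\Box(A)+C_n^\Box(B)$ and $C_n^\Box(A)\cap C_n^\Box(B)$ inside $C_n^\Box(G)$ are spanned by classes of nondegenerate cubes lying in $A$, in $B$, or in $A\cap B$ respectively. This holds because $C_n^\Box(G)$ is free on the nondegenerate cubes, and each such cube either lies in a given induced subhypergraph or does not.
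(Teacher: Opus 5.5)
Your proposal is correct and follows the paper's route. The paper's proof just invokes Lemma \ref{lemma box excision-like} and reuses the second-isomorphism-theorem argument from the proof of Theorem \ref{gamma excision}, as you do, and your Step 2 also makes explicit what the paper leaves implicit: the chain-level identification holds only up to degree $i+1$, so surjectivity in degree $n+1$ is needed, which gives the strict bound $n<i+1$.
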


\begin{proof}
The proof follows from Lemma \ref{lemma box excision-like}
\end{proof}

            \medskip

\newpage
\bibliography{ref}
\bibliographystyle{amsalpha}
\end{document}